\newtheorem{theorem}{T{\hskip 0pt\footnotesize\bf HEOREM}}[section]
\newtheorem{lemma}[theorem]{L{\hskip 0pt\footnotesize\bf EMMA}}
\newtheorem{proposition}[theorem]{P{\hskip 0pt\footnotesize\bf ROPOSITION}}
\newtheorem{definition}[theorem]{D{\hskip 0pt\footnotesize\bf EFINITION}}
\newtheorem{corollary}[theorem]{C{\hskip 0pt\footnotesize\bf OROLLARY}}
\def\O{\Omega}
\def\D{\mathcal D}
\def\M{\mathcal{M}_d}
\def\RR{\mathcal{R}}
\def\la{\lambda}
\def\Da{\Delta}
\def\veps{\varepsilon}
\def\veps{\vec{\varepsilon}}
\def\vba{\vec{\beta}}
\def\vda{\vec{\delta}}
\def\BMO{\mathrm{BMO}}
\def\LMO{\mathrm{LMO}}
\def\bmo{\mathrm{bmo}}
\def\veps{\vec{\varepsilon}}
\def\vba{\vec{\beta}}
\def\vj{\vec{j}}
\def\vk{\vec{k}}
\newcommand{\bprop} {\begin{proposition}}
\newcommand{\eprop} {\end{proposition}}
\newcommand{\btheo} {\begin{theorem}}
\newcommand{\etheo} {\end{theorem}}
\newcommand{\blem} {\begin{lemma}}
\newcommand{\elem} {\end{lemma}}
\newcommand{\bcor} {\begin{corollary}}
\newcommand{\ecor} {\end{corollary}}
\newcommand{\Be}{\begin{equation}}
\newcommand{\Ee}{\end{equation}}
\newcommand{\Bea}{\begin{eqnarray}}
\newcommand{\Eea}{\end{eqnarray}}
\newcommand{\Bes}{\begin{equation*}}
\newcommand{\Ees}{\end{equation*}}
\newcommand{\Beas}{\begin{eqnarray*}}
\newcommand{\Eeas}{\end{eqnarray*}}
\newcommand{\Ba}{\begin{array}}
\newcommand{\Ea}{\end{array}}
\newcommand{\mult}{\mathrm{Mult}}
\newcommand{\empset}{\emptyset}
\def\R{\mathbb{R}}
\def\N{\mathbb{N}}
\def\T{\mathbb{T}}
\title{From little $\BMO$ to product $\BMO$ through multiplication operator}
\author{Benoit F. Sehba}
\thanks{Department of Mathematics, School of Physics and Mathematical Sciences, University of Ghana, P.O. Box LG 62, Legon-Accra, Ghana. {\tt bfsehba@ug,edu,gh} }
\begin{document}
\maketitle
\begin{abstract} We characterize the multipliers from the little $\BMO$ of Cotlar-Sadosky to the product $\BMO$ of Chang-Fefferman  on the polydisk. 
\end{abstract}
\section{Introduction and notation}
Recall that in the one parameter setting, the space of functions of bounded mean oscillation on the torus $\T$, $\BMO(\T)$ consists of all measurable functions $f$ on $\T$ such that
\begin{equation}
\|f\|_*:=\sup_{I\subset \T}\frac{1}{|I|}\int_{I}|f(t)-m_If|dt<\infty,
\end{equation}
where the supremum is taking on all intervals $I\subset \T$, and $m_If=\frac{1}{|I|}\int_I f(s)ds$. Here, for any measurable set $E$, $|E|$ denotes the Lebesgue measure of $E$. 
\medskip

The multiplier algebra on $\BMO(\T)$ has been characterized by D. Stegenga in \cite{stegenga}. He proved that a function $b$ is a multiplier of $\BMO(\T)$ if and only if $b$ is bounded and satisfies the condition
\begin{equation*}
\sup_{I\subset \T}\frac{\log\left(|I|\right)}{|I|}\int_{I}|f(t)-m_If|dt<\infty,
\end{equation*}

When comes to the multiparameter setting, there appear three main notions of bounded mean oscillation. Our interest here is for multipliers between the generalization of the little space of functions of bounded mean oscillation introduced by M. Cotlar and C. Sadosky in \cite{cotsad} and the product space of functions of bounded mean oscillation on open subset of $\T^N$ discovered by A. Chang and R. Fefferman \cite{ChFef1}.


Recall that the so-called small BMO space on the bitorus $\T^2$, introduced by Cotlar and Sadosky, is defined as
\begin{equation} \label{bmo1}
   \bmo(\T^2):= \{ b \in L^2(\T^2): \sup_{R \subset \T^2 \text{ rectangle }} \frac{1}{|R|}\int_R |b(s,t) - m_R b| ds dt < \infty \},
\end{equation}
$m_Rb=\frac{1}{|R|}\int_R b(s,t) ds dt.$ It easily generalizes as follows,
\begin{equation} \label{bmo}
   \bmo(\T^N)= \{ b \in L^2(\T^N): ||b||_{*,N}:=\sup_{R \subset \T^N \text{ rectangle }} \frac{1}{|R|}\int_R |b(t_1,\cdots,t_N) - m_R b| dt_1\cdots dt_N < \infty \},
\end{equation}
$m_Rb=\frac{1}{|R|}\int_Rb(t_1,\cdots,t_N)dt_1\cdots dt_N$. Seen as a quotient space with the set of constants, $\bmo(\T^N)$ is a Banach space with norm
$$\Vert b\Vert_{\bmo}:=||b||_{*,N}.$$
We observe that a function $b\in \bmo(\T^N)$ if and only if for any positive integers $N_1$ and $N_2$ such that $0<N_1+N_2=N$,
\begin{equation}\label{eq:equivdefbmo}
\max\{\sup_{s\in \T^{N_2}}\|b(\cdot,s)\|_{*,N_1},\sup_{t\in \T^{N_1}}\|b(t,\cdot)\|_{*,N_1}\}=M<\infty
\end{equation}
and $M$ is comparable to $||b||_{*,N}$ (see \cite{benoit1}). Note also that if $b\in \bmo(\T^N)$, then for any rectangle $R\subset \T^{N_1}$, $0<N_1<N$, $m_Rb\in \bmo(\T^{N-N_1})$ uniformly.
\vskip .2cm
The multi-parameter space of functions of bounded mean oscillation on open sets, or product $\BMO$ space on $\T^N$, was introduced by Chang and Fefferman in \cite{ChFef1}. It is the dual of the product
Hardy space $H^1(\T^N)$, given by
$$
H^1(\T^N):=\{f\in L^1(\T^N): H_jf\in L^1(\T^N), H_{k_1}\cdots H_{k_j}f\in L^1(\T^N),\,\,\,k_l\in \{1,\cdots, n\},\,\,\, j,l=1,\cdots ,n\},
$$
where $H_j$ is the Hilbert transform in the $j$-th coordinate. The space $\bmo(\T^N)$ is a strict subspace of $\BMO(\T^N)$ and they coincide only when $N=1$ (\cite{fergsad}).

Given two Banach spaces of functions $X$ and $Y$, the space of pointwise multipliers from $X$ to $Y$ is defined as follows
$$\mathcal {M}(X,Y)=\{\phi:\phi f\in Y\,\,\,\textrm{for all}\,\,\,f\in X\}.$$
When $X=Y$, we simply write $\mathcal {M}(X,X)=\mathcal {M}(X)$. When $\phi\in \mathcal {M}(X,Y)$, we say $\phi$ is a multipler from $X$ to $Y$.

The author obtained in \cite{benoit1} that the only multipliers of  $\bmo(\T^N)$ are the constants. The algebra of multipliers on $\BMO(\T^N)$ is considered by S. Pott and the author in a forthcoming paper (see \cite{benoit} for the two parameter case). In this note, we focus on the multipliers from $\bmo(\T^N)$ to $\BMO(\T^N)$. To deal with this question, we observe that it is enough to consider the same question for the dyadic versions of $\bmo(\T^N)$ and $\BMO(\T^N)$ namely $\bmo^d(\T^N)$ and $\BMO^d(\T^N)$. Once the multipliers from $\bmo^d(\T^N)$ to $\BMO^d(\T^N)$ are obtained, usual arguments ( \cite{pipher,treil}) allow to conclude for the multipliers from $\bmo(\T^N)$ to $\BMO(\T^N)$.

\section{Notations, definitions and statement of the result}
 We identify $\T$ with the interval $[0,1)$ in the usual way and write $\mathcal D$
for the set of all dyadic subintervals. We denote by
$\mathcal R=\D^N$ the set of all dyadic rectangles $R=I_1\times\cdots\times I_N$, where $I_j\in \mathcal D$,
$j=1,\cdots, N$. Let $h_I$ denote the Haar wavelet adapted
to the dyadic interval $I$,
$$
h_I=|I|^{-1/2}(\chi_{I^+}-\chi_{I^-}),
$$
where $I^+$ and $I^-$
are the right and left halves of $I$, respectively.

For any rectangle $R\in \mathcal {R}$, the product Haar wavelet
adapted to $R=I_1\times\cdots\times I_N$ is defined by $h_R(t_1,\cdots, t_N)=h_{I_1}(t_1)\cdots h_{I_N}(t_N)$.
These wavelets form an orthonormal basis of
$$
L_0^2(\T^N) = \left\{ f \in L^2(\T^N):  \int_\T f(\cdots,t_j,\cdots) dt_j =0, j=1,\cdots, N \text{ for a.e. }t_1,\cdots, t_N \in \T \right\},
$$
with
$$f=\sum_{R\in \mathcal {R}} \langle f,h_R \rangle h_R=\sum_{R\in \mathcal
{R}}f_Rh_R \qquad (f \in L^2_0(\T^N)).
$$
The mean of $f\in
L^2(\mathbb {T}^N)$ over the dyadic rectangle $R$ is denoted $m_Rf$, and $f_R$ stands for the Haar coefficient $\langle f, h_R \rangle$ of $f$. If $R=S\times T$, we will also write $f_R=f_{S\times T}$ when this is necessary.

The space of functions of dyadic bounded mean oscillations in $\mathbb T^N$,
$\BMO^d(\mathbb T^N)$, is the space of all function $f\in L_0^2(\mathbb
T^N)$ such that
\begin{equation}
 ||f||_{\BMO^d}^2:=\sup_{\O\subset \mathbb
{T}^N}\frac{1}{|\O|}\sum_{R\in \O}|f_R|^2=\sup_{\O\subset \mathbb
{T}^N}\frac{1}{|\O|}||P_\O f||_2^2 < \infty,
\end{equation}
where the supremum
is taken over all open sets $\O\subset \mathbb T^N$ and $P_\O$ is the
orthogonal projection on the subspace spanned by Haar functions
$h_R$, $R\in \mathcal R$ and $R\subset  \O$.
\vskip .2cm
Recall that $\BMO^d(\T^N)$ is the dual space of the dyadic product Hardy space $H^{1}_d(\mathbb T^N)$ defined by

$$
H^{1,d}(\mathbb T^N)=\{f\in L_0^1(\mathbb T^N): \mathcal {S}[f]\in
L^1(\mathbb T^N)\},
$$
where $\mathcal S$ is the dyadic square function,
\begin{equation}   \label{eq:sq}
     \mathcal {S}[f] = \left(\sum_{R \in \RR}  \frac{\chi_R}{|R|} |f_R|^2 \right)^{1/2}.
\end{equation}


We note that $\bmo^d(\T^N)$ is defined by taking supremum only on dyadic rectangles in the definition of $\bmo(\T^N)$.

We also introduce the mean dyadic product $\BMO$ that we denote $\BMO_m^d$, as the space of those functions $f\in L_0^2(\T^N)$ such that there is a constant $C>0$ such that for any integer $0<N_1<N$, and any dyadic rectangle $R\in \D^{N_1}$, $$\|m_Rf\|_{\BMO^d(\T^{N-N_1})}\le C.$$
We denote by $\|f\|_{\BMO_m^d(\T^N)}$ the infimum of the constants as above. We remark that for any $f\in \bmo^d(\T^N)$,
$$\|f\|_{\BMO_m^d(\T^{N})}\le \|f\|_{\bmo^d(\T^{N})}.$$

As usual, for $\vj =(j_1,\cdots, j_N) \in \N_0 \times\cdots\times \N_0=\N_0^N$ we define the $j_1$-th generation of dyadic intervals and the $\vj$-th generation of dyadic rectangles,
$$
     \D_{j_1} = \{I \in \D: |I|= 2^{-j_1} \},
$$
$$
    \RR_{\vj} = \D_{j_1} \times\cdots\times \D_{j_N} = \{ I_1 \times\cdots\times I_N \in \RR: |I_k|= 2^{-j_k}\}.
$$

%
The product Haar martingale differences are given by
$$
   \Da_{\vj} f = \sum_{R\in \RR_{\vj}} \langle  f, h_R \rangle h_R,
$$
and the expectations by
$$
   E_{\vj} f = \sum_{\vk \in \N_0^N ,  \vk < \vj } \Da_{\vk}f,
$$
where we write $ \vk < \vj $ for $k_l < j_l$, $l=1,\cdots,N$ and correspondingly $\vk \le \vj $ for $k_l \le j_l$,
for $f \in L^2(\T^N)$.

The following operators defined on $L^2(\T^N)$ will be also needed
\begin{equation}   \label{eq:qdef}
   Q_{\vj}f =\sum_{ \vk \ge \vj } \Da_{\vk}f.
\end{equation}
Let us recall the definition of the space of functions of dyadic logarithmic mean oscillation on the $\T^N$, $\LMO^d(\T^N)$ as introduced in \cite{sehba}.
\begin{definition}   \label{def:LMOd}
Let $\phi \in L^2(\T^N)$. We say that $ \phi \in \LMO^d(\T^N)$, if there exists $C >0$ with
$$
    \|Q_{\vj} \phi\|_{\BMO^d(\T^N)} \le C \frac{1}{\left(\sum_{k=1}^N  j_k\right) +N}
$$
for all $\vj = (j_1,\cdots, j_N)  \in \N_0^N$. The infimum of such constants is denoted by $\|\phi\|_{\LMO^d}$.
\end{definition}

The above space has the following equivalent definition (see \cite{sehba}).
\begin{proposition} \label{prop:LMOequivchar}
Let $\phi \in L^2(\T^N)$. Then $ \phi \in \LMO^d(\T^N)$, if and only if there exists a constant $C >0$ such that for each
dyadic rectangle $R= I_1 \times\cdots\times I_N$ and each open set $\Omega \subseteq R$,
\begin{equation}   \label{eq:lmochar}
   \frac{\left(\log\frac{4}{|I_1|}+\cdots +\log\frac{4}{|I_N|}\right)^2}{|\Omega|} \sum_{Q \in \RR, Q \subseteq \Omega} |\phi_Q|^2 \le C.
\end{equation}
\end{proposition}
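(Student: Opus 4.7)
The plan is to prove each implication directly by unwinding what the projection $Q_{\vj}$ does and rewriting the $\BMO^d$ norm of $Q_{\vj}\phi$ as a Carleson-type sum over dyadic sub-rectangles of a given $R\in \RR_{\vj}$. The key preliminary observation is that if $R=I_1\times\cdots\times I_N$ satisfies $|I_l|=2^{-j_l}$, then
$$
\sum_{l=1}^N\log(4/|I_l|)=(\log 2)\bigl((\textstyle\sum_l j_l)+2N\bigr),
$$
which is comparable to $(\sum_l j_l)+N$ uniformly in $\vj$; hence the two gauge functions appearing in the $\LMO^d$ and Carleson formulations agree up to an absolute constant, and one only needs to translate between Haar-coefficient sums and the $\BMO^d$ norm.

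For the direction $\LMO^d(\T^N)\Rightarrow \eqref{eq:lmochar}$, fix $R$ as above and an open $\Omega\subseteq R$. Any dyadic sub-rectangle $Q\subseteq \Omega$ lies inside $R$, so its side-lengths are at most $2^{-j_l}$, which means $Q\in \RR_{\vk}$ with $\vk\ge \vj$; in particular $\phi_Q$ is exactly the Haar coefficient of $Q_{\vj}\phi$ at $Q$. Testing the $\BMO^d$ norm of $Q_{\vj}\phi$ against this $\Omega$ yields
$$
\frac{1}{|\Omega|}\sum_{Q\subseteq \Omega}|\phi_Q|^2\le \|Q_{\vj}\phi\|_{\BMO^d(\T^N)}^2\le \frac{\|\phi\|_{\LMO^d}^2}{((\sum_l j_l)+N)^2},
$$
and multiplying through by $(\sum_l\log(4/|I_l|))^2$ produces \eqref{eq:lmochar}.

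For the converse, fix $\vj\in \N_0^N$ and an open $\Omega\subseteq \T^N$, and use the disjoint partition $\T^N=\bigsqcup_{R\in \RR_{\vj}}R$. Every $Q\in \RR_{\vk}$ with $\vk\ge \vj$ sits inside a unique $R\in \RR_{\vj}$, so
$$
\sum_{Q\subseteq \Omega,\,\vk\ge \vj}|\phi_Q|^2=\sum_{R\in \RR_{\vj}}\sum_{Q\subseteq \Omega\cap R}|\phi_Q|^2.
$$
Applying \eqref{eq:lmochar} inside each $R$ to the open set $\Omega\cap R$ bounds each inner sum by $C|\Omega\cap R|/(\sum_l\log(4/|I_l|))^2$; summing over $R$ collapses $\sum_R|\Omega\cap R|$ to $|\Omega|$, so dividing by $|\Omega|$ and taking the supremum in $\Omega$ gives $\|Q_{\vj}\phi\|_{\BMO^d}\lesssim ((\sum_l j_l)+N)^{-1}$, i.e.\ $\phi\in \LMO^d(\T^N)$. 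The only mildly delicate point is the openness of $\Omega\cap R$ when $R$ is a half-open dyadic rectangle, but this is harmless: the dyadic sub-rectangles of $R$ avoid its boundary, so replacing $R$ with a slight open enlargement does not affect any of the Haar sums, and the remaining work is purely bookkeeping on the partition $\RR_{\vj}$.
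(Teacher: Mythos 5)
Your argument is correct and is the standard one: the paper itself gives no proof of Proposition \ref{prop:LMOequivchar}, merely citing \cite{sehba}, and the proof there is exactly this translation, namely $\sum_l\log(4/|I_l|)=(\log 2)\bigl((\sum_l j_l)+2N\bigr)\simeq(\sum_l j_l)+N$ together with the observation that the rectangles contributing to $Q_{\vj}\phi$ are precisely those lying in a single cell of the partition $\RR_{\vj}$, so that $\|P_\Omega Q_{\vj}\phi\|_2^2=\sum_{R\in\RR_{\vj}}\sum_{Q\subseteq\Omega\cap R}|\phi_Q|^2$ and $\sum_{R}|\Omega\cap R|=|\Omega|$. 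The only quibble is your closing remark: dyadic subrectangles of a half-open $R$ \emph{do} meet the faces of $R$ that $R$ contains, so the clean way to dispose of the openness of $\Omega\cap R$ is to note that in both formulations the supremum over open sets agrees with the supremum over countable unions of dyadic rectangles (a standard, measure-preserving replacement), after which the partition argument applies verbatim; this is immaterial to the result.
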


Our strong version of the above space is defined as follows.
\begin{definition}\label{def:LMO}
If we write  $\LMO^{d,\alpha}(\T^N)$ for the space given in Definition  \ref{def:LMOd} but with respect to the translated grid by $\alpha\in [0,1)^N$,   $\mathcal {D}^{N,\alpha}$ of $\D^N$, then $\LMO(\T^N)$ is the intersection of all $\LMO^{d,\alpha}(\T^N)$.
\end{definition}

This property is well known for the product $\BMO$ (see \cite{pipher, treil}). We denote by $\|f\|_{\LMO(\T^N)}$, the infimum of the constants $C>0$ such that for any $\alpha\in [0,1)^N$, $$\|f\|_{\LMO^{d.\alpha}}<C.$$

It is clear that $\bmo(\T^N)$ embeds continuously in each $\bmo^{d,\alpha}(\T^N)$, $\alpha\in [0,1)^N$. The converse holds in the one-parameter case (see \cite{pipher}). Let us suppose that the converse holds in $(N-1)$-parameter case and prove that in this case, it also holds in $N$-parameter. If $f\in \cap_{\alpha\in [0,1)^N}\bmo^{d,\alpha}(\T^N)$, then  from what has been said at the beginning of the previous section, we have that for any $s\in \T$, the function $f(s,\cdot)$ is uniformly in $\bmo^{d,\beta}(\T^{N-1})$ for any $\beta\in [0,1)^{N-1}$. It follows from the hypothesis that $f(s,\cdot)$ is uniformly in $\bmo(\T^{N-1})$. Also for any $t\in \T^{N-1}$, the function $f(\cdot, t)$ is uniformly in $\BMO^{d,\alpha_1}(\T)$ for any $\alpha_1\in [0,1)$, hence $f(\cdot, t)$ is uniformly in $\cap_{\alpha_1\in [0,1)}\BMO^{d,\alpha_1}(\T)=\BMO(\T)$. We conclude that that $f\in \bmo(\T^N)$.
\vskip .2cm
Our main result in this paper is the following.
\begin{theorem}\label{main}
$\mathcal M(\bmo(\T^N), \BMO(\T^N))=L^\infty(\T^N)\cap \LMO(\T^N).$
\end{theorem}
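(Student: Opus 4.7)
The plan is to first pass to the dyadic setting, so that it suffices to prove
\[
\mathcal{M}(\bmo^d(\T^N), \BMO^d(\T^N)) = L^\infty(\T^N)\cap \LMO^d(\T^N),
\]
and then transfer back to the non-dyadic case by averaging over the translated grids $\D^{N,\alpha}$; the averaging argument sketched in the text for $\bmo$ adapts verbatim to $\BMO$ and, by Definition~\ref{def:LMO}, to $\LMO$.

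For the inclusion $L^\infty \cap \LMO^d \subseteq \mathcal{M}$, fix $\phi \in L^\infty\cap\LMO^d$ and $f\in\bmo^d$ normalized so that $\int_{\T^N} f=0$. I would expand the product $\phi f$ in the product Haar basis by classifying, in each coordinate $k$ separately, the three non-trivial cases $S_k=T_k$, $S_k\subsetneq T_k$, $T_k\subsetneq S_k$ that can make $\langle h_{S_k}h_{T_k},h_{I_k}\rangle$ non-zero. This writes $\phi f$ as a finite sum of paraproduct-type operators, the critical one being the ``upper'' paraproduct
\[
\Pi_\phi(f)=\sum_{R\in \RR}\phi_R\,(m_Rf)\,h_R,
\]
whose squared $\BMO^d$ norm is controlled by $\sup_{\Omega} |\Omega|^{-1}\sum_{R\subset\Omega}|\phi_R|^2|m_Rf|^2$. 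Applying the one-dimensional John--Nirenberg inequality in each slice (via the iterated characterization \eqref{eq:equivdefbmo}) gives $|m_Rf|\lesssim \|f\|_{\bmo^d}\sum_{k=1}^N\log(4/|I_k|)$ for $R=I_1\times\cdots\times I_N$, and Proposition~\ref{prop:LMOequivchar}, combined with a stopping-time decomposition of $\Omega$ into maximal dyadic sub-rectangles, then absorbs this logarithmic growth exactly into the $\LMO^d$ Carleson condition on $\phi$. The ``lower'' paraproduct $\sum_R f_R(m_R\phi)h_R$ is handled by the trivial bound $|m_R\phi|\le\|\phi\|_\infty$ together with the inclusion $\bmo^d\subset\BMO^d$; the remaining diagonal and mixed terms are controlled by combining $\|\phi\|_\infty$ with the square-function inequality for $f$.

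For the reverse inclusion, assume $\phi\in\mathcal{M}(\bmo^d,\BMO^d)$, so by the closed graph theorem there is $C$ with $\|\phi f\|_{\BMO^d}\le C\|f\|_{\bmo^d}$ for every $f\in\bmo^d$. The bound $\phi\in L^\infty$ follows by testing on characteristic functions $\chi_E\in L^\infty\subseteq \bmo^d$: choosing $E=\{|\phi|>\lambda\}\cap R$ for a dyadic rectangle $R$ and examining the oscillation of $\phi\chi_E$ on $R$ yields the desired pointwise bound on $|\phi|$ in the usual Stegenga fashion. To extract $\phi\in\LMO^d$, fix a dyadic rectangle $R_0=I_1\times\cdots\times I_N$ and an open $\Omega\subseteq R_0$, and take as test function a dyadic analog of $f_{R_0}(t)=\sum_{k=1}^N g_k(t_k)$, where each $g_k\in\BMO^d(\T)$ is a log-type function localized near $I_k$ chosen so that $|m_Jg_k|\gtrsim\log(4/|J|)$ for every dyadic $J\subseteq I_k$. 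Such an $f_{R_0}$ is uniformly in $\bmo^d$, and $|m_Rf_{R_0}|\gtrsim \sum_k\log(4/|I_k|)$ uniformly for $R\subseteq R_0$; substituting into $\|\phi f_{R_0}\|_{\BMO^d}\le C$ and isolating the $\Pi_\phi$ contribution by summing the Haar coefficients over sub-rectangles of $\Omega$ reproduces the characterization \eqref{eq:lmochar}.

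The main obstacle I expect is the paraproduct bookkeeping on the sufficiency side: the multiparameter Haar product generates many ``mixed'' paraproducts where $\phi$ is above $f$ in some coordinates and below it in others, and for each the correct interplay between $\|\phi\|_\infty$, the $\LMO^d$ Carleson condition, and the iterated $\bmo^d$ structure of $f$ must be identified. A related secondary difficulty on the necessity side is extracting the $\Pi_\phi$ contribution cleanly from $\phi f_{R_0}$, which requires verifying that the other paraproduct pieces produce only lower-order terms; this forces a careful choice of the one-variable test functions $g_k$.
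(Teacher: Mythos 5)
Your overall architecture matches the paper's: reduce to the dyadic case, expand $\phi f$ in the product Haar basis into $3^N$ paraproduct-type pieces, control the main paraproduct $\Pi_\phi$ via the $\LMO^d$ Carleson condition, and recover the continuous statement through the translated grids $\D^{N,\alpha}$. Your necessity direction is also essentially the paper's, but with one avoidable complication: the paper's test function $\log_{R_0}=\sum_{j}\log_{I_j}$ is built so as to be \emph{constant}, equal to $k_1+\cdots+k_N$ (where $|I_j|=2^{-k_j}$), on all of $R_0$; hence for every open $\Omega\subseteq R_0$ one has the exact identity $\|P_\Omega(\phi\log_{R_0})\|_2^2=(k_1+\cdots+k_N)^2\|P_\Omega\phi\|_2^2$, and there is nothing to ``isolate.'' Your variant, which only requires $|m_Jg_k|\gtrsim\log(4/|J|)$ for all dyadic $J\subseteq I_k$, genuinely forces you to separate the $\Pi_\phi$ contribution from the other paraproduct pieces --- the very ``secondary difficulty'' you flag --- and the paper's choice of test function makes that difficulty disappear.

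The genuine gap is on the sufficiency side. Dismissing the ``remaining diagonal and mixed terms'' as ``controlled by combining $\|\phi\|_\infty$ with the square-function inequality for $f$'' omits the technical core of the paper. Beyond $\Pi_\phi$ (which needs $\LMO^d$) and the Haar multiplier $T^1_\phi f=\sum_{R}(m_R\phi)f_Rh_R$ (which needs $L^\infty$), the decomposition produces operators such as
$$T_\phi^3f=\sum_{S\in\D^{N_1},\,T\in\D^{N_2}}\left(m_T\phi_S\right)\left(m_Sf_T\right)h_S\,h_T$$
and a three-block analogue $T^4_\phi$, in which $\phi$ sits ``above'' $f$ in some coordinates and ``below'' it in others. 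These are neither Haar multipliers nor paraproducts with symbol $\phi$, and no square-function estimate for $f$ bounds them directly. The paper proves their boundedness from $\bmo^d(\T^N)$ to $\BMO^d(\T^N)$ under the hypothesis $\phi\in\BMO_m^d(\T^N)$ (which does follow from $\phi\in L^\infty$), and the proof is a substantial argument: one reduces $\BMO^d$ membership of $T^3_\phi b$ to $L^2$-boundedness of the paraproduct $\Pi_{T^3_\phi b}$, approximates by the truncations $E_k^{(1)}$, passes to the auxiliary symbol $\widetilde{T^3_\phi b}$ of \cite{ps}, splits $\phi=E_k\phi+Q_k\phi$, and controls each piece using the slice estimate of Lemma \ref{lem:estim} together with the $L^2$ bounds for the mixed paraproducts $\Pi^{\vba}_\phi$ (Lemma \ref{lem:othersparaL2}). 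None of this is recoverable from the two ingredients you name, so as written your sufficiency argument establishes the boundedness of only two of the $3^N$ pieces of $M_\phi$.
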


To prove the above theorem, we decompose the product of two functions in the Haar basis of $L_0^2(\T^N)$ and study the boundedness of each of the operators appearing in this decomposition. The boundedness of paraproducts on $L^2(\T^N)$ will be a useful tool among others.
\vskip .2cm
All over the text, given two positive quantities $A$ and $B$, the notation $A\lesssim B$ means that there exists some constant $C>0$ such that $A\leq CB$. When $A\lesssim B$ and $B\lesssim A$, we write $A \simeq B$.

\section{Few useful facts}
\subsection{Little $\BMO$, product $\BMO$ and a useful estimate}
We start by recalling the following embedding (see \cite{cotsad}).
\begin{lemma}\label{lem:bmoBMOembeddings}
There exists a constant $C>0$ such that for any $f\in \bmo^d(\T^N)$,
\begin{equation}\label{eq:bmoBMOembeddingscasegene}
\|f\|_{\BMO^d(\T^N)}\le C\|f\|_{\bmo^d(\T^N)}.
\end{equation}
\end{lemma}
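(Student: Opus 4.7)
The plan is to compute the defining Carleson sum of $\BMO^d(\T^N)$ for $f$ after peeling off the first coordinate. For $I\in\D$ set
\[F^I(\vec t) := \int_\T f(s,\vec t)\,h_I(s)\,ds,\qquad \vec t\in\T^{N-1},\]
so that for any dyadic rectangle $R=I\times S$ with $S\in\D^{N-1}$ one has $f_R=(F^I)_S$. For an open $\Omega\subset\T^N$ put $\Omega_1^{\vec t}:=\{s\in\T:(s,\vec t)\in\Omega\}$ and $U_I:=\{\vec t\in\T^{N-1}: I\subset\Omega_1^{\vec t}\}$; both sets are open, and $I\times S\subset\Omega$ if and only if $S\subset U_I$. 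Thus
\[\sum_{R\subset\Omega}|f_R|^2 \;=\; \sum_{I\in\D}\,\sum_{\substack{S\in\D^{N-1}\\ S\subset U_I}}|(F^I)_S|^2.\]

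The crucial step is the following projection inequality, valid for every open $U\subset\T^{N-1}$ and every $g\in L^2(\T^{N-1})$:
\[\sum_{\substack{S\in\D^{N-1}\\ S\subset U}}|g_S|^2 \;=\; \|P_U g\|_{L^2}^2 \;\le\; \int_U |g(\vec t)|^2\,d\vec t,\]
where $P_U$ is the orthogonal projection onto $\mathrm{span}\{h_S:S\in\D^{N-1},\,S\subset U\}$. Indeed, every such $h_S$ is supported in $U$, so $P_U g=P_U(g\chi_U)$, and contractivity of $P_U$ gives $\|P_U g\|_{L^2}\le\|g\chi_U\|_{L^2}$. Applying this with $U=U_I$ and $g=F^I$, then summing in $I$ and exchanging sum with integral,
\[\sum_{R\subset\Omega}|f_R|^2 \;\le\; \int_{\T^{N-1}} \sum_{I\subset\Omega_1^{\vec t}}|\langle f(\cdot,\vec t), h_I\rangle|^2\,d\vec t.\]

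To finish, a Lebesgue differentiation argument applied to the defining rectangle inequality of $\bmo^d(\T^N)$ (the dyadic analog of \eqref{eq:equivdefbmo}) shows that for a.e.\ $\vec t\in\T^{N-1}$ the slice $s\mapsto f(s,\vec t)$ lies in $\bmo^d(\T)=\BMO^d(\T)$ with norm at most $C\|f\|_{\bmo^d(\T^N)}$. The $1$-dimensional Carleson condition therefore yields $\sum_{I\subset\Omega_1^{\vec t}}|\langle f(\cdot,\vec t),h_I\rangle|^2 \le C|\Omega_1^{\vec t}|\,\|f\|_{\bmo^d(\T^N)}^2$, and Fubini's identity $\int|\Omega_1^{\vec t}|\,d\vec t=|\Omega|$ closes the estimate. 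The one genuinely non-routine ingredient is the projection inequality: the obvious alternative of estimating $\|F^I\|_{\BMO^d(\T^{N-1})}$ through the identity $F^I=\tfrac12|I|^{1/2}\bigl(m_{I^+}f(\cdot,\cdot)-m_{I^-}f(\cdot,\cdot)\bigr)$ and then invoking the $(N-1)$-dimensional Carleson embedding would cost a factor $|I|^{1/2}$, and the resulting sum $\sum_I|I|\,|U_I|$ diverges; the projection identity sidesteps that loss by trading the Haar sum for an honest $L^2$ integral on $U_I$.
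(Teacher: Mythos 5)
Your argument is correct, and it is worth noting that the paper does not actually prove this lemma: it is simply recalled from Cotlar--Sadosky, so your write-up supplies a self-contained proof where the text gives only a citation. The structure is sound: the identification $f_R=(F^I)_S$ for $R=I\times S$, the equivalence $I\times S\subseteq\Omega\iff S\subseteq U_I$, and the projection inequality $\sum_{S\subseteq U}|g_S|^2\le\int_U|g|^2$ (which follows exactly as you say from $P_Ug=P_U(g\chi_U)$ and contractivity) combine correctly with Tonelli and the slice estimate to give $\sum_{R\subseteq\Omega}|f_R|^2\lesssim\|f\|_{\bmo^d}^2\int_{\T^{N-1}}|\Omega_1^{\vec t}|\,d\vec t=|\Omega|\,\|f\|_{\bmo^d}^2$. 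Your closing remark is also apt: the naive route through $\|F^I\|_{\BMO^d(\T^{N-1})}\lesssim|I|^{1/2}\|f\|_{\bmo^d}$ really does lead to the divergent sum $\sum_I|I|\,|U_I|$, and the contraction trick is precisely what avoids it. Two small points of hygiene. First, $U_I$ need not be open (with $I$ a half-open dyadic interval one can build open $\Omega$ for which $U_I$ is a single point); but openness is never used --- you only need $U_I$ measurable so that the interchange of sum and integral is legitimate and each $h_S$ with $S\subseteq U_I$ is supported in $U_I$, and that holds. Second, the last step silently uses two classical one-parameter facts: that the a.e.\ slice bound $\sup_I\frac{1}{|I|}\int_I|f(s,\vec t)-m_If(\cdot,\vec t)|\,ds\lesssim\|f\|_{\bmo^d(\T^N)}$ follows by dyadic Lebesgue differentiation from the rectangle oscillation bound (this is the dyadic version of the paper's display \eqref{eq:equivdefbmo}), and that passing from this $L^1$ oscillation bound to the Carleson sum $\sum_{I\subseteq O}|\langle f(\cdot,\vec t),h_I\rangle|^2\lesssim|O|$ requires John--Nirenberg together with a decomposition of the open set $O$ into maximal dyadic intervals. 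Both are standard, but since they carry the whole weight of the constant $C$, a sentence acknowledging them would make the proof airtight.
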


Note that it was proved in \cite{bp} that
for any integers $0<N_1,N_2<N$ with $N_1+N_2=N$, and for any $f\in \BMO^d(\T^{N_1})$ and $g\in \BMO^d(\T^{N_2})$, the tensor product $f\otimes g$ defined by $$f\otimes g(s,t)=f(s)g(t),\,\,\,s\in \T^{N_1},\,\,\,t\in \T^{N_2}$$ is uniformly in $\BMO^d(\T^N)$ with
$$\|f\otimes g\|_{\BMO(\T^N)}\lesssim \|f\|_{\BMO(\T^{N_1})}\|g\|_{\BMO(\T^{N_2})}.$$

Hence the following result is enough to conclude that the embedding $\bmo^d(\T^N)\hookrightarrow \BMO^d(\T^N)$ is strict (see also \cite{cotsad}).
\begin{lemma}
Let $0<N_1,N_2<N$ be integers with $N_1+N_2=N$. Then for any $f\in \bmo^d(\T^{N_1})$ and $g\in \bmo^d(\T^{N_2})$, the tensor product $f\otimes g$ defined by $$f\otimes g(s,t)=f(s)g(t),\,\,\,s\in \T^{N_1},\,\,\,t\in \T^{N_2}$$ is in $\bmo^d(\T^N)$ if and only if $f$ and $g$ are both bounded. i.e $f\in L^\infty(\T^{N_1})$ and $g\in L^\infty(\T^{N_2})$.
\end{lemma}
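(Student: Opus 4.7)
The plan is to reduce everything to a one-variable question via the slice characterization of $\bmo^d$, which is the dyadic analogue of (\ref{eq:equivdefbmo}): for $h\in L^2(\T^N)$ one has $h\in \bmo^d(\T^N)$ if and only if
\begin{equation*}
M(h):=\max\Bigl\{\sup_{t\in\T^{N_2}}\|h(\cdot,t)\|_{\bmo^d(\T^{N_1})},\;\sup_{s\in\T^{N_1}}\|h(s,\cdot)\|_{\bmo^d(\T^{N_2})}\Bigr\}<\infty,
\end{equation*}
and $M(h)\simeq \|h\|_{\bmo^d(\T^N)}$. The dyadic version is proved exactly as its continuous counterpart recalled in the introduction. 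Specialising to $h=f\otimes g$, scalar absolute homogeneity of the $\bmo^d$ seminorm yields the clean factorisations
\begin{equation*}
\|h(\cdot,t)\|_{\bmo^d(\T^{N_1})}=|g(t)|\,\|f\|_{\bmo^d(\T^{N_1})},\qquad \|h(s,\cdot)\|_{\bmo^d(\T^{N_2})}=|f(s)|\,\|g\|_{\bmo^d(\T^{N_2})}.
\end{equation*}

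For the sufficiency, if $f,g\in L^\infty$ then the two suprema in $M(f\otimes g)$ are dominated by $\|g\|_\infty\|f\|_{\bmo^d(\T^{N_1})}$ and $\|f\|_\infty\|g\|_{\bmo^d(\T^{N_2})}$ respectively, so the characterisation immediately produces $f\otimes g\in \bmo^d(\T^N)$. For the necessity, I would argue in the non-degenerate regime $\|f\|_{\bmo^d(\T^{N_1})}>0$ and $\|g\|_{\bmo^d(\T^{N_2})}>0$; the case in which one factor is constant is vacuous, since constants are bounded automatically. In the non-degenerate case, the finiteness of $M(f\otimes g)$ coupled with the strict positivity of the two seminorms forces $\sup_t|g(t)|<\infty$ and $\sup_s|f(s)|<\infty$, i.e.\ $f\in L^\infty(\T^{N_1})$ and $g\in L^\infty(\T^{N_2})$.

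No serious obstacle arises: the slice-wise characterisation of $\bmo^d$ turns the tensor product into a strictly one-parameter statement, and the scalar homogeneity of the seminorm makes the suprema factorise explicitly. The only mild point requiring attention is the exclusion of the degenerate situation where one factor is essentially a constant, which is handled by inspection and is consistent with reading $\bmo^d$ as a quotient modulo constants.
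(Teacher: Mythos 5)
Your argument is correct in substance but takes a genuinely different (though closely related) route from the paper's. You slice: you invoke the a.e.-slice characterization of $\bmo^d(\T^N)$ (the dyadic analogue of (\ref{eq:equivdefbmo})), so that by scalar homogeneity of the seminorm $\|(f\otimes g)(\cdot,t)\|_{\bmo^d(\T^{N_1})}=|g(t)|\,\|f\|_{\bmo^d(\T^{N_1})}$, and boundedness of $g$ (and symmetrically of $f$) drops out at once. The paper instead averages: it uses the fact, recorded in the introduction, that $m_S h\in\bmo^d(\T^{N_2})$ uniformly in $S\in\D^{N_1}$ whenever $h\in\bmo^d(\T^N)$; applied to $h=f\otimes g$ this gives $|m_Sf|\,\|g\|_{\bmo^d(\T^{N_2})}\le\|f\otimes g\|_{\bmo^d(\T^N)}$ for every dyadic rectangle $S$, and the Lebesgue Differentiation Theorem then yields $f\in L^\infty(\T^{N_1})$. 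The two mechanisms are dual to one another (slicing versus averaging-then-differentiating). The paper's version has the small advantage that the rectangle-mean property is elementary and already on record, whereas your slice characterization requires its own a.e. argument in the dyadic setting; it is true and proved just as in the continuous case, but as written you are leaning on an unproved dyadic statement, and the suprema over $s$ and $t$ should be essential suprema. For sufficiency the paper simply observes that $f\otimes g$ is bounded, hence in $\bmo^d$; your slice bound gives the marginally sharper estimate $\|f\otimes g\|_{\bmo^d(\T^N)}\lesssim\max\{\|g\|_\infty\|f\|_{\bmo^d(\T^{N_1})},\|f\|_\infty\|g\|_{\bmo^d(\T^{N_2})}\}$.

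One caveat: your dismissal of the degenerate case is not quite accurate. If $f$ is a nonzero constant $c$, then $f\otimes g(s,t)=cg(t)$ lies in $\bmo^d(\T^N)$ for \emph{every} $g\in\bmo^d(\T^{N_2})$, bounded or not, so the ``only if'' direction genuinely fails there; the case is not vacuous but rather a counterexample to the literal statement, since you still owe the conclusion $g\in L^\infty(\T^{N_2})$. This is a defect of the lemma as stated rather than of your proof: the paper's own argument has exactly the same blind spot, as the normalisation $\|f\|_{\bmo^d}=\|g\|_{\bmo^d}=1$ silently excludes constant factors. Since the lemma is only applied with both factors unbounded, nothing downstream is affected, but it would be cleaner to state the non-degeneracy hypothesis explicitly rather than call the excluded case vacuous.
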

\begin{proof}
Of course if $f\in L^\infty(\T^{N_1})$ and $g\in L^\infty(\T^{N_2})$, then $f\otimes g\in \bmo^d(\T^N)$ uniformly. Let us now suppose that $f\in \bmo^d(\T^{N_1})$ and $g\in \bmo^d(\T^{N_2})$ are such that $f\otimes g\in \bmo^d(\T^N)$. We may suppose without loss of generality that $\|f\|_{\bmo^d(\T^{N_1})}=\|g\|_{\bmo^d(\T^{N_2})}=1$. Recalling that if $h\in \bmo^d(\T^N)$, then for any $R\in \D^{N'}$, $0<N'<N$, $\|m_Rh\|_{\bmo^d(\T^{N-N'})}\le \|h\|_{\bmo^d(\T^N)}$, we obtain that for any $S\in \D^{N_1}$,
$$|m_Sf|\|g\|_{\bmo^d(\T^{N_2})}=\|m_S\left(f\otimes g\right)\|_{\bmo^d(\T^{N_2})}\le \|f\otimes g\|_{\bmo^d(\T^N)},$$
that is for any $S\in \D^{N_1}$,
$$|m_Sf|\le \|f\otimes g\|_{\bmo^d(\T^N)}<\infty.$$
It follows from the Lebesgue Differentiation Theorem that $f\in L^\infty(\T^{N_1})$. The same reasoning show that for any $T\in \D^{N_2}$,
$$|m_Tg|\le \|f\otimes g\|_{\bmo^d(\T^N)}<\infty.$$ Hence that $g\in L^\infty(\T^{N_2})$. The proof is complete.
\end{proof}
We will need the follows facts (see \cite{benoit1}).
\begin{lemma}\label{lem:testfunction}
The following assertions hold.
\begin{itemize}
\item[(i)] Given an interval $I\subset \T$, one can construct a function $\log_I$ such that
$$\log_I(t)\simeq \log\frac{4}{|I|}\,\,\,\textrm{for}\,\,\,t\in I$$ and $$\|\log_I\|_{\BMO(\T)}\le C<\infty$$
where the constant $C$ does not depend on $I$.
\item[(ii)] Given $f_j\in \BMO(\T)$, $j=1,\cdots, N$, the function $f(t_1,\cdots,t_N)=\sum_{j=1}^Nf_j(t_j)$ is uniformly in $\bmo(\T^N)$ with
$$\|f\|_{\bmo(\T^N)}\le \sum_{j=1}^N\|f_j\|_{\BMO(\T)}.$$
\end{itemize}
\end{lemma}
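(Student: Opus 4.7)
The two assertions are standard constructions; (i) is the log-over-an-interval example of a $\BMO$ function localized to $I$, and (ii) is a one-variable-per-coordinate lifting that is essentially an exercise in Fubini.

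For (i), the plan is to build $\log_I$ from a truncated logarithm. Write $c_I$ for the center of $I$ and $|\cdot|_\T$ for arclength distance on $\T$, and set $L(t):=\log(1/|t-c_I|_\T)$. By translation invariance of the $\BMO(\T)$ seminorm, together with the classical $\BMO$ estimate for $\log|\cdot|$ on $\R$ (and a harmless patch near the antipode of $c_I$), one gets $\|L\|_{\BMO(\T)}\le C_0$ with $C_0$ independent of $I$. I would then define
$$\log_I(t):=\min\bigl(L(t)+\log 4,\ \log(4/|I|)\bigr).$$
For $t\in I$, $|t-c_I|_\T\le |I|/2$ forces $L(t)+\log 4\ge \log(8/|I|)>\log(4/|I|)$, so $\log_I(t)=\log(4/|I|)$ on $I$ exactly, which gives the required pointwise comparison. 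Since the truncation $x\mapsto \min(x,\,\log(4/|I|))$ is $1$-Lipschitz, and postcomposition with a $1$-Lipschitz map multiplies the $\BMO$ seminorm only by an absolute factor (via the averaged-difference formula $\|g\|_{\BMO}\simeq \sup_J |J|^{-2}\iint_{J\times J}|g(s)-g(t)|\,ds\,dt$), we conclude $\|\log_I\|_{\BMO(\T)}\lesssim C_0$. The case $|I|\ge 1/4$ is trivial since then $\log(4/|I|)$ is a bounded constant.

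For (ii), a direct computation suffices. Fix a rectangle $R=I_1\times\cdots\times I_N\subset \T^N$. By Fubini, the mean of $f_j(t_j)$ over $R$ equals $m_{I_j}f_j$, so
$$f(t_1,\ldots,t_N)-m_Rf=\sum_{j=1}^N\bigl(f_j(t_j)-m_{I_j}f_j\bigr).$$
Applying the triangle inequality and Fubini once more yields
$$\frac{1}{|R|}\int_R|f-m_Rf|\,dt_1\cdots dt_N\ \le\ \sum_{j=1}^N\frac{1}{|I_j|}\int_{I_j}|f_j-m_{I_j}f_j|\,dt_j\ \le\ \sum_{j=1}^N\|f_j\|_{\BMO(\T)},$$
and taking the supremum over $R$ gives the desired norm estimate.

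No genuine obstacle arises in either step. The only items that need a line of justification are the uniform $\BMO(\T)$ bound for $L$ and the Lipschitz-composition inequality, both of which are standard; everything else is Fubini and the triangle inequality.
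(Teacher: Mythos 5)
Your proof is correct. The paper itself gives no argument for this lemma — it simply cites \cite{benoit1} — so there is nothing internal to compare against; your construction (the truncated logarithm $\min(L(t)+\log 4,\ \log(4/|I|))$ with the Lipschitz-composition bound for (i), and Fubini plus the triangle inequality for (ii)) is the standard one and fills that gap cleanly.
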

The following will be useful in the text.
\begin{lemma}\label{lem:estim}
There exists a constant $C>0$ such that for any dyadic rectangle $S\in \D^{N_1}$, any open set $\Omega\subseteq \T^{N_2}$, $N_1+N_2=N$, and any $b\in \bmo^d(\T^N)$, the following holds.
$$\|\chi_SP_\Omega b\|_{L^2(\T^N)}^2\le C|S||\Omega|\|b\|_{\bmo^d(\T^N)}.$$
\end{lemma}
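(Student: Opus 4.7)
The plan is to reduce the estimate to a slice-wise statement by Fubini. Write $\T^N=\T^{N_1}\times\T^{N_2}$ with variables $(s,t)$; interpret $\chi_S$ as multiplication in the $s$-variable and $P_\Omega$ as the orthogonal projection in the $t$-variable onto the closed span of the Haar functions $h_Q$ with $Q\in\D^{N_2}$, $Q\subseteq\Omega$. By Fubini,
$$
\|\chi_S P_\Omega b\|_{L^2(\T^N)}^2
=\int_S \bigl\|P_\Omega b(s,\cdot)\bigr\|_{L^2(\T^{N_2})}^2\,ds,
$$
so the task reduces to bounding the inner norm uniformly in $s\in S$.

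The core step is the control of sections. By the equivalent characterization \eqref{eq:equivdefbmo} transposed to the dyadic setting, every section of a function in $\bmo^d(\T^N)$ belongs to $\bmo^d(\T^{N_2})$ with
$$
\|b(s,\cdot)\|_{\bmo^d(\T^{N_2})}\lesssim\|b\|_{\bmo^d(\T^N)}
$$
uniformly in $s$. Lemma \ref{lem:bmoBMOembeddings} in the $N_2$-parameter setting then gives
$$
\|b(s,\cdot)\|_{\BMO^d(\T^{N_2})}\lesssim\|b\|_{\bmo^d(\T^N)},
$$
and the very definition of $\BMO^d(\T^{N_2})$ applied to the open set $\Omega$ yields
$$
\|P_\Omega b(s,\cdot)\|_{L^2(\T^{N_2})}^2\le|\Omega|\,\|b(s,\cdot)\|_{\BMO^d(\T^{N_2})}^2\lesssim|\Omega|\,\|b\|_{\bmo^d(\T^N)}^2.
$$
Integrating over $s\in S$ produces the factor $|S|$ and closes the estimate.

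There is no genuine obstacle here; the argument is a routine combination of Fubini, the sectional characterization of little $\BMO$, and the embedding $\bmo^d\hookrightarrow\BMO^d$. The only subtlety worth flagging is the exponent on $\|b\|_{\bmo^d(\T^N)}$: the chain of inequalities naturally yields the bound $C\,|S||\Omega|\,\|b\|_{\bmo^d(\T^N)}^2$, so the statement of the lemma almost certainly should read with a squared norm on the right-hand side (the square appears to have dropped out in typesetting). This squared form is also the one dimensionally compatible with controlling sums of Haar coefficients in subsequent arguments, and I would write the proof delivering that inequality.
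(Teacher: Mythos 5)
Your argument is correct and is essentially the paper's own proof: the author likewise slices via Fubini, bounds $\|P_\Omega b(s,\cdot)\|_{L^2(\T^{N_2})}^2$ by $|\Omega|\,\|b(s,\cdot)\|_{\BMO^d(\T^{N_2})}^2$ using the sectional characterization of $\bmo^d$ together with the embedding into $\BMO^d$, and integrates over $S$ to collect the factor $|S|$. Your observation about the exponent is also right: by homogeneity the right-hand side must carry $\|b\|_{\bmo^d(\T^N)}^2$, and the missing square in the stated inequality is a typo that the paper's own displayed chain of inequalities implicitly confirms.
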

\begin{proof}
We easily obtain
\begin{eqnarray*}
\|\chi_SP_\Omega b\|_{L^2(\T^N)}^2 &=& \|\chi_S\|P_\Omega b\|_{L^2(\T^{N_2})}^2\|_{L^2(\T^{N_1})}^2\\ &\leq& |\Omega|\|\chi_S\| b\|_{BMO(\T^{N_2})}^2\|_{L^2(\T^{N_1})}^2\\ &\leq& |\Omega|\| b\|_{bmo(\T^{N})}^2\|\chi_S\|_{L^2(\T^{N_1})}^2\\ &=& |S||\Omega|\|b\|_{\bmo^d(\T^N)}.
\end{eqnarray*}
\end{proof}
\subsection{Product of two functions in the Haar basis}
As previously said, we would like to start by characterizing multipliers from $\bmo(\T^N)$ to $\BMO(\T^N)$ on the dyadic side. For this, we will need to split the multiplication operator by $\phi$, $M_\phi$ into several operators that we will then estimate. We start by few definitions and observations.
\vskip .2cm

For $I$ a dyadic interval and $\varepsilon\in \{0,1\}$, we define $h^{\varepsilon}_I$ by
$$h^{\varepsilon}_I =\left\{ \begin{matrix} h_I &\text{if }& \varepsilon=0\\
      |I|^{-1/2}|h_I| & \text{ if } & \varepsilon=1
                                  \end{matrix} \right.
$$

For $R=I_1\times\cdots\times I_N\in \RR$ and $\vec {\varepsilon}=(\varepsilon_1,\cdots,\varepsilon_N)$, with $\varepsilon_j\in \{0,1\}$, we write
$$h^{\vec {\varepsilon}}_R(t1,\cdots,t_N)=h^{\varepsilon_1}_{I_1}(t_1)\cdots h^{\varepsilon_N}_{I_N}(t_N).$$
We recall that the sequence $\{h_R\}_{R\in \mathcal R}$ forms an orthonormal basis of $L_0^2(\T^N)$. We observe that given $f,\phi\in L_0^2(\T^N)$ with finite Haar expansions, we can write $$f(s,t)=\sum_{I\in \D}f_I(s)h_I(t),\,\,\,s\in \T^{N-1},\,\,\,t\in \T $$
and $$\phi(s,t)=\sum_{I\in \D}\phi_I(s)h_I(t),\,\,\,s\in \T^{N-1},\,\,\,t\in \T. $$
It follows from the usual one-parameter expansion that
\begin{equation}\label{eq:expand1}\left(f\phi\right)(s,t)=\Pi_\phi^1f(s,t)+\Delta_\phi^1 f(s,t)+\Pi_f^1\phi(s,t)
\end{equation}
where $$\Pi_\phi^1f(s,t)=\sum_{I\in \D}\phi_I(s)m_If(s)h_I(t),$$
$$\Delta_\phi^1 f(s,t)=\sum_{I\in \D}\phi_I(s)f_I(s)\frac{\chi_I(t)}{|I|}$$
and $$\Pi_f^1\phi (s,t)=\sum_{I\in \D}f_I(s)m_I\phi (s)h_I(t),\,\,\,s\in \T^{N-1},\,\,\,t\in \T.$$
Now observe that the coefficients in each of the above operators are product of two functions of $(N-1)$ variables, we repeat exactly the previous process for each of the coefficients. For example in $\Pi_\phi^1f$, let us expand $\phi_I(s)m_If(s)$. We obtain writing $s=(u,v),\,\,\,u\in \T^{N-2},\,\,\,v\in \T$ that
\Beas
\phi_I(u,v)m_If(u,v) &=& \sum_{J\in \D}\phi_{I\times J}(u)m_{I\times J}f(u)h_J(v)+ \sum_{I\in \D}\phi_{I\times J}(u)f_{I\times J}(u)\frac{\chi_J(v)}{|J|}\\ &+& \sum_{J\in \D}m_If_J(u)m_J\phi_I (u)h_J(v).
\Eeas
If we do the same for $\Delta_\phi^1 f$ and $\Pi_f^1\phi$, we obtain exactly that the product $f\phi$ is a sum of nine operators whose coefficients are product of two functions of $(N-2)$ variables. Repeating the same process for each of the coefficients, until we get only constant coefficients, we obtain that $f\phi$ can be written as a sum of $3^N$ operators.
\vskip .2cm
To describe the general form of the operators obtained, we consider the operators $B_{\veps,\vda,\vba}(\phi,\cdot)$ defined by
\begin{equation}\label{paraprodgene2}B_{\veps,\vda,\vba}(\phi,f):=\sum_{R\in \RR}\langle \phi,h^{\vec {\varepsilon}}_R\rangle \langle f,h^{\vec {\delta}}_R\rangle h^{\vec {\beta}}_R.\end{equation}

We denote by $\mathcal U$ the set of the triples $(\veps,\vda,\vba)$ satisfying the following relation:
\begin{itemize}
\item[1)] $(\veps,\vda,\vba)\neq (\vec {0},\vec {0},\vec {0})$,
\item[2)] $(\veps,\vda)\neq (\vec {1}, \vec {1})$,
\item[3)] $\epsilon_j+ \delta_j=1-\beta_j$.
\end{itemize}
here $\vec {0}=(0,\cdots, 0)$, and $\vec {1}=(1,\cdots, 1)$. We have that
\begin{equation}\label{eq:expanmuloper}f\phi=\sum_{(\veps,\vda,\vba)\in \mathcal {U}}B_{\veps,\vda,\vba}(\phi,f).
\end{equation}

Observe that for $\veps=\vec {0}$, the operators $B_{\vec {0},\vda,\vba}(\phi,)=\Pi_\phi^{\vba}$ are the paraproducts studied in \cite{sehba}. In particular, the operator $$\Pi_\phi^{\vec {0}}f=\Pi_\phi f:=\sum_{\vec {j}\in {\N_0}^N}(\Delta_{\vec j}\phi)(E_{\vec j}f)=\sum_{R\in \mathcal R}h_R\phi_Rm_Rf$$ is usually called the main paraproduct. It is easy to check that $\Pi_\phi$ is bounded on $L^2(\T^N)$ if and only if $\phi\in \BMO^d(\T^N)$. The other paraproducts are
$$\Delta_\phi f=\Pi_\phi^{\vec 1}f:=\sum_{\vec {j}\in {\N_0}^N}(\Delta_{\vec j}\phi)(\Delta_{\vec j}f)=\sum_{R\in \mathcal R}\frac{\chi_R}{|R|}\phi_Rf_R;$$
and for $\vec {\beta}\neq \vec {0}, \vec {1}$,
$$\Pi_\phi^{\vba}f:=\sum_{R=S\times T\in \mathcal R}\phi_{S\times T}m_Sf_Th_S\frac{\chi_T}{|T|}$$ where if $R=R_1\times \cdots\times R_N$, $S=S_1\times\cdots\times S_{N_1}\in \D^{N_1}$ and $T=T_1\times\cdots\times T_{N_2}\in \D^{N_2}$, $0<N_1,N_2<N$, $N_1+N_2=N$, then $R_j=S_j$ if $\beta_j=0$ and $R_j=T_j$ if $\beta_j=1$.
\section{Proof of Theorem \ref{main}.}
We start this section by observing the following which is probably well known.
\begin{lemma}\label{lem:inftyBMO}
Let $\phi\in L^2(\T^N)$. The paraproduct $\Pi_\phi:L^\infty(\T^N)\rightarrow \BMO^d(\T^N)$ boundedly if and only if $\phi\in \BMO^d(\T^N)$. Moreover,
$$\|\Pi_\phi\|_{L^\infty(\T^N)\rightarrow \BMO^d(\T^N)}\backsimeq \|\phi\|_{\BMO^d(\T^N)}.$$
\end{lemma}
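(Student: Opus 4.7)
The plan is to exploit the particularly clean action of the main paraproduct on the Haar system: since $\Pi_\phi f = \sum_{R\in \RR} \phi_R (m_R f)\, h_R$ and $\{h_R\}$ is orthonormal in $L_0^2(\T^N)$, I would first record the closed form
\[
\langle \Pi_\phi f, h_R\rangle = \phi_R\, m_R f \qquad \text{for every } R \in \RR.
\]
This identity is the whole content of the lemma: both inequalities reduce to matching the square sums that appear in the definition of $\BMO^d$.

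For the sufficiency direction, I would assume $\phi \in \BMO^d(\T^N)$, fix $f \in L^\infty(\T^N)$ and any open set $\Omega \subseteq \T^N$, and use the trivial bound $|m_R f| \le \|f\|_\infty$ to obtain
\[
\frac{1}{|\Omega|} \sum_{R \subseteq \Omega} |\langle \Pi_\phi f, h_R\rangle|^2
\le \|f\|_\infty^2 \cdot \frac{1}{|\Omega|} \sum_{R \subseteq \Omega} |\phi_R|^2
\le \|f\|_\infty^2 \|\phi\|_{\BMO^d}^2.
\]
Taking the supremum over $\Omega$ yields $\|\Pi_\phi f\|_{\BMO^d} \le \|\phi\|_{\BMO^d}\,\|f\|_\infty$, hence the quantitative upper bound $\|\Pi_\phi\|_{L^\infty\to \BMO^d} \lesssim \|\phi\|_{\BMO^d}$.

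For the necessity direction, I would test the operator on the constant function $f \equiv 1 \in L^\infty(\T^N)$. Since $m_R 1 = 1$ for every dyadic rectangle, $\Pi_\phi 1 = \sum_R \phi_R h_R$ is precisely the $L_0^2$-projection of $\phi$, which has the same $\BMO^d$-seminorm as $\phi$ (the seminorm sees only the mean-zero part). Hence
\[
\|\phi\|_{\BMO^d} = \|\Pi_\phi 1\|_{\BMO^d} \le \|\Pi_\phi\|_{L^\infty\to \BMO^d},
\]
which closes the comparison and proves both the characterization and the norm equivalence.

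I do not anticipate any serious obstacle; the argument is essentially a one-line computation in each direction, resting on the explicit Haar coefficients of $\Pi_\phi f$ and the square-function definition of $\BMO^d$. The only mild care needed is in the necessity step, to observe that the admissible test function $f \equiv 1 \in L^\infty(\T^N)$ reproduces the $L_0^2$-part of $\phi$, whose $\BMO^d$-seminorm equals that of $\phi$ itself.
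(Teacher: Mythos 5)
Your proof is correct and follows essentially the same route as the paper: the sufficiency step is the identical estimate $|m_R f|\le\|f\|_\infty$ applied to the square sums over an open set $\Omega$, and the necessity step is the same test on the constant function $1$, which the paper phrases as $\|P_\Omega\phi\|_2^2=\|P_\Omega(\Pi_\phi 1)\|_2^2\le|\Omega|\,\|\Pi_\phi\|^2$. Your remark that the Haar expansion only recovers the mean-zero part of $\phi$, whose $\BMO^d$-seminorm coincides with that of $\phi$, is a correct and worthwhile clarification that the paper leaves implicit.
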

\begin{proof}
Clearly, if $\Pi_\phi$ is bounded from $L^\infty(\T^N)$ to $\BMO^d(\T^N)$, then we have for any open set $\Omega\subset \T^N$,
$$\|P_\Omega \phi\|_2^2=\|P_\Omega\left(\Pi_\phi 1\right)\|_2^2\le |\Omega|\|\Pi_\phi\|_{L^\infty(\T^N)\rightarrow\BMO^d(\T^N)}$$
which proves that $\phi\in \BMO^d(\T^N)$.

Conversely, assuming that $\phi\in \BMO^d(\T^N)$, we obtain that for any open set $\Omega\subset \T^N$ and any $b\in L^\infty(\T^N)$,
$$\frac{1}{|\Omega|}\sum_{R\in \mathcal {R}, R\subset \Omega}|\phi_R|^2\left(m_Rb\right)^2\le \|b\|_{L^\infty(\T^N)}^2\frac{1}{|\Omega|}\sum_{R\in \mathcal {R}, R\subset \Omega}|\phi_R|^2\le \|b\|_{L^\infty(\T^N)}^2\|\phi\|_{\BMO^d(\T^N)}^2.$$
This proves that $\Pi_\phi b\in \BMO^d(\T^N)$ for any $b\in L^\infty(\T^N)$. The proof of the lemma is complete.
\end{proof}

We next have the following necessary condition.
\begin{proposition} Let $\phi\in L^2(\T^N)$. If $\phi \in \mathcal M(\bmo(\T^N),\BMO^d(\T^N))$, then $\phi\in \BMO_m^d(\T^N)$.
\end{proposition}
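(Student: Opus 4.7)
The plan is to test the multiplier property against the simplest localizing function at $R$, namely $\chi_R$ viewed as a function on $\T^N$ depending only on the $s$-variable. Since $\chi_R\in L^\infty(\T^N)\subseteq\bmo(\T^N)$ with $\|\chi_R\|_{\bmo(\T^N)}\lesssim 1$ uniformly in $R$, the hypothesis yields $\phi\chi_R\in\BMO^d(\T^N)$ with $\|\phi\chi_R\|_{\BMO^d(\T^N)}\lesssim\|\phi\|_{\mathcal M}$. The strategy is then to read off $(m_R\phi)_T$, for each $T\in\D^{N_2}$, as a single well-chosen Haar coefficient of $\phi\chi_R$, and to conclude by applying the defining inequality for $\BMO^d$.

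We may assume each side $R_i$ of $R=R_1\times\cdots\times R_{N_1}$ is a proper dyadic subinterval of $\T$, since otherwise the $L_0^2$ constraint on $\phi$ forces $m_R\phi\equiv 0$ trivially. Let $R'=R_1'\times\cdots\times R_{N_1}'$, where $R_i'$ is the dyadic parent of $R_i$. Then $R\subsetneq R'$ componentwise, $|R'|=2^{N_1}|R|$, and $h_{R'}$ is constant on $R$ equal to $\epsilon_R\,|R'|^{-1/2}$ for some sign $\epsilon_R\in\{\pm 1\}$. Writing $\phi_T(s):=\int_{\T^{N_2}}\phi(s,t)h_T(t)\,dt$ and using the identity $(m_R\phi)_T=m_R\phi_T$, a direct computation gives
\[
(\phi\chi_R)_{R'\times T}=\int_R h_{R'}(s)\phi_T(s)\,ds=\epsilon_R\,|R'|^{-1/2}\,|R|\,(m_R\phi)_T,
\]
so that $|(\phi\chi_R)_{R'\times T}|^2=2^{-N_1}|R|\,|(m_R\phi)_T|^2$.

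To finish, fix an arbitrary open $\Omega\subseteq\T^{N_2}$ and apply the $\BMO^d$ inequality for $\phi\chi_R$ to the open set $U=R'\times\Omega\subseteq\T^N$ of measure $|U|=2^{N_1}|R||\Omega|$. Since every rectangle of the form $R'\times T$ with $T\subseteq\Omega$ sits inside $U$, the corresponding partial sum of squared coefficients is dominated by the full sum:
\[
2^{-N_1}|R|\sum_{T\subseteq\Omega}|(m_R\phi)_T|^2\le\sum_{Q\subseteq U}|(\phi\chi_R)_Q|^2\le|U|\,\|\phi\chi_R\|_{\BMO^d}^2\lesssim 2^{N_1}|R||\Omega|\,\|\phi\|_{\mathcal M}^2.
\]
Dividing through and taking the supremum over $\Omega$ gives $\|m_R\phi\|_{\BMO^d(\T^{N_2})}\lesssim 2^{N_1}\|\phi\|_{\mathcal M}$, uniformly in $R\in\D^{N_1}$, which is precisely the $\BMO_m^d$ condition. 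The only nontrivial step is the Haar coefficient identity above, and I do not expect it to be a real obstacle: it follows directly from $h_{R'}$ being constant on $R$, which is the very reason for choosing $R'$ to be the dyadic parent of $R$ in each coordinate.
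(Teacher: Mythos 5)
Your argument is correct, and it reaches the conclusion by a genuinely different route from the paper. The paper reduces to showing that $m_I\phi$ is a multiplier from $L^\infty(\T^{N-1})$ to $\BMO^d(\T^{N-1})$ for a dyadic interval $I$ (iterating to reach general rectangles), and does so by $H^{1,d}$--$\BMO^d$ duality: it factors $\frac{\chi_I}{|I|}=\bigl(|I|^{1/2}h_I\bigr)\bigl(|I|^{-1/2}h_I\bigr)$, absorbs the first factor into a bounded function and the second into an $H^{1,d}(\T^N)$ test function, and invokes the multiplier bound $L^\infty\to\BMO^d$. You instead test directly on the single bounded function $\chi_R$ and recover $(m_R\phi)_T$ exactly (up to the sign $\epsilon_R$ and the factor $2^{-N_1/2}|R|^{1/2}$) as the Haar coefficient $(\phi\chi_R)_{R'\times T}$ of the product, where $R'$ is the componentwise dyadic parent; the $\BMO^d$ inequality applied to the open set $R'\times\Omega$ then gives the uniform bound $\|m_R\phi\|_{\BMO^d(\T^{N_2})}\lesssim 2^{N_1}\|\phi\|_{\mathcal M}$. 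Your computation checks out: $h_{R'}$ is indeed constant equal to $\epsilon_R|R'|^{-1/2}$ on $R$, $|R'|=2^{N_1}|R|$, and $(m_R\phi)_T=m_R(\phi_T)$. What your approach buys is elementarity and uniformity: no duality, no tensor-product $H^1$ estimate, it treats rectangles of arbitrary dimension $N_1$ in one stroke rather than by iteration, and it only uses the multiplier hypothesis on indicator functions. What the paper's approach buys is the formally stronger intermediate conclusion that $m_I\phi$ is itself a multiplier $L^\infty(\T^{N-1})\to\BMO^d(\T^{N-1})$, which is what makes the iteration to deeper averages immediate. The only loose end in your write-up is the boundary case where some side $R_i=\T$: your dismissal of it rests on $\phi\in L_0^2$, whereas the proposition as stated only assumes $\phi\in L^2$; since the paper defines $\BMO_m^d$ inside $L_0^2$ this is a harmless mismatch inherited from the paper itself, but it deserves a sentence.
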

\begin{proof}
From the definition of $\BMO_m(\T^N)$ and Lemma \ref{lem:inftyBMO}, it is enough to prove that if $\phi \in \mathcal M(L^\infty(\T^N),\BMO^d(\T^N))$, then for any dyadic interval $I$, $m_I\phi \in \mathcal M(L^\infty(\T^{N-1}),\BMO^d(\T^{N-1}))$.

Let $b\in L^\infty(\T^{N-1})$, and $I\in \D$. Observe that the function $|I|^{1/2}h_I$ is uniformly bounded while $|I|^{-1/2}h_I$ is uniformly in $H^{1,d}(\T)$. We easily obtain
\Beas
\|bm_I\phi\|_{\BMO^d(\T^{N-1})} &=& \sup_{g\in H^{1,d}(\T^{N-1}), \|g\|_{H^{1,d}(\T^{N-1})}\le 1}\left|\int_{\T^{N-1}}m_I\phi(s)b(s) g(s)ds\right|\\ &=&  \sup_{g\in H^{1,d}(\T^{N-1}), \|g\|_{H^{1,d}(\T^{N-1})}\le 1}\left|\int_{\T^{N}}\frac{\chi_I(t)}{|I|}\phi(s,t)b(s)g(s)dsdt\right|\\ &=& \sup_{g\in H^{1,d}(\T^{N-1}), \|g\|_{H^{1,d}(\T^{N-1})}\le 1}\left|\int_{\T^{N}}\phi(s,t)\left(|I|^{1/2}h_I(t)b(s)\right)\left(|I|^{-1/2}h_I(t)g(s)\right)dsdt\right|\\ &\le& \sup_{g\in H^{1,d}(\T^{N-1}), \|g\|_{H^{1,d}(\T^{N-1})}\le 1}\|\phi \left(|I|^{1/2}h_Ib\right)\|_{\BMO^d(\T^N)}\||I|^{-1/2}h_Ig\|_{H^{1,d}(\T^N)}\\ &\le& \sup_{g\in H^{1,d}(\T^{N-1}), \|g\|_{H^{1,d}(\T^{N-1})}\le 1}\|M_\phi\|_{L^\infty(\T^N)\rightarrow \BMO^d(\T^N)}\||I|^{1/2}h_Ib\|_{L^\infty(\T^N)}\|g\|_{H^{1,d}(\T^{N-1})}\\ &\le& \|M_\phi\|_{L^\infty(\T^N)\rightarrow \BMO^d(\T^N)}\|b\|_{L^\infty(\T^{N-1})}.
\Eeas
That is $m_I\phi \in \mathcal M(L^\infty(\T^{N-1}),\BMO^d(\T^{N-1}))$ and the proof is complete.
\end{proof}
\begin{theorem} \label{thm:forw}
 Let $\phi \in \mathcal M(\bmo(\T^N),\BMO^d(\T^N))$. Then $\phi \in \LMO^d(\T^N)\cap L^\infty(\T^N)$.
\end{theorem}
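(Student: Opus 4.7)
The overall strategy is to establish the two inclusions $\phi\in\LMO^d(\T^N)$ and $\phi\in L^\infty(\T^N)$ separately. The $\LMO^d$ membership will be verified through the Carleson-type criterion of Proposition~\ref{prop:LMOequivchar}, by testing the multiplier on the logarithmic functions supplied by Lemma~\ref{lem:testfunction}. The $L^\infty$ membership will be obtained by iterating the dimensional reduction used inside the proof of the preceding proposition, down to a one-parameter multiplier statement from $L^\infty(\T)$ into $\BMO^d(\T)$, and then identifying that multiplier class with $L^\infty(\T)$ through an $H^{1,d}$-duality test.

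For the $\LMO^d$ step I fix a dyadic rectangle $R=I_1\times\cdots\times I_N$, set $L=\log(4/|I_1|)+\cdots+\log(4/|I_N|)$, and choose the test function
\[g_R(t_1,\ldots,t_N)=\sum_{j=1}^N\log_{I_j}(t_j).\]
Lemma~\ref{lem:testfunction}(ii) gives $g_R\in\bmo(\T^N)$ with norm independent of $R$, and part (i) of that lemma furnishes an absolute constant $C_0$ with $|g_R-L|\le C_0$ on $R$, so that $(g_R-L)\chi_R$ lies in $L^\infty(\T^N)\subseteq\bmo(\T^N)$ uniformly in $R$. The multiplier hypothesis then forces both $\phi g_R$ and $\phi(g_R-L)\chi_R$ into $\BMO^d(\T^N)$ with norms bounded by a fixed multiple of $\|M_\phi\|$. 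For any dyadic rectangle $Q\subseteq R$, since $h_Q$ is supported on $Q\subseteq R$, one has the identity
\[(\phi g_R)_Q=L\,\phi_Q+\bigl(\phi(g_R-L)\chi_R\bigr)_Q.\]
Squaring, summing over $Q\subseteq\Omega$ with $\Omega\subseteq R$ open, applying the defining $\BMO^d$ estimate to each of the two right-hand terms, and dividing by $|\Omega|$ produces
\[\frac{L^2}{|\Omega|}\sum_{Q\in\RR,\,Q\subseteq\Omega}|\phi_Q|^2\lesssim 1,\]
which is precisely the criterion of Proposition~\ref{prop:LMOequivchar}, hence $\phi\in\LMO^d(\T^N)$.

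For the $L^\infty$ step I use that $L^\infty(\T^N)\hookrightarrow\bmo(\T^N)$, so that $\phi$ is in particular a multiplier from $L^\infty(\T^N)$ into $\BMO^d(\T^N)$. The computation inside the proof of the preceding proposition then shows that, for each one-dimensional dyadic interval $I$, $m_I\phi$ is a multiplier from $L^\infty(\T^{N-1})$ into $\BMO^d(\T^{N-1})$ with norm at most that of $M_\phi$. Iterating this reduction over the first $N-1$ coordinates, the one-variable function $b:=m_{I_1\times\cdots\times I_{N-1}}\phi$ is a multiplier from $L^\infty(\T)$ into $\BMO^d(\T)$ with norm bounded uniformly in $I_1,\ldots,I_{N-1}$. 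Dualising via $\BMO^d(\T)=(H^{1,d}(\T))^*$ then yields $\|bg\|_{L^1}\lesssim \|g\|_{H^{1,d}}$, and testing at the $H^{1,d}(\T)$-unit vector $g=h_I/|I|^{1/2}$ forces $|I|^{-1}\int_I|b|\lesssim 1$ for every dyadic $I\subseteq\T$. Dyadic Lebesgue differentiation gives $b\in L^\infty(\T)$ with a uniform bound. Finally, letting each $I_j$ contract to a point $s_j$ and using iterated Lebesgue differentiation in the first $N-1$ variables yields $|\phi(s,t_N)|\lesssim 1$ almost everywhere, i.e.\ $\phi\in L^\infty(\T^N)$.

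The principal technical obstacle is the error-term control in the $\LMO^d$ step: the uniform $L^\infty$, hence $\bmo$, bound on $(g_R-L)\chi_R$ supplied by Lemma~\ref{lem:testfunction}(i) is exactly what prevents the remainder $\phi(g_R-L)\chi_R$ from swallowing the main term $L\,\phi_Q$ after squaring and summing, and so lets the logarithmic weight $L$ be extracted as a factor in front of the Carleson sum. Everything else, including the $L^\infty$ conclusion, is a mechanical iteration of the preceding proposition combined with a one-line duality identity in one variable.
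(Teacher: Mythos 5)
Your proposal is correct. The $\LMO^d$ half is essentially the paper's own argument: the paper also tests $M_\phi$ on the dyadic logarithm $\log_R=\sum_j\log_{I_j}$ supplied by Lemma~\ref{lem:testfunction}, but it takes $\log_R$ to be \emph{exactly} constant equal to $k_1+\cdots+k_N$ on $R$, so that $\|P_\Omega(\phi\log_R)\|_2^2=(k_1+\cdots+k_N)^2\|P_\Omega\phi\|_2^2$ in one line; your version, which only uses $|g_R-L|\le C_0$ on $R$ and absorbs the discrepancy by applying the multiplier hypothesis a second time to the bounded function $(g_R-L)\chi_R$, is a more robust rendering of the same idea and is what one actually needs if the test functions of Lemma~\ref{lem:testfunction} are only known up to the equivalence $\log_I\simeq\log(4/|I|)$. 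The $L^\infty$ half is where you genuinely diverge: the paper simply takes $f=\chi_{R'}$ with $R$ the dyadic parent of $R'$ in each direction, reads off $2^{-N}|m_{R'}\phi|=|R|^{-1/2}|(\phi\chi_{R'})_R|\le\|M_\phi\|\,\|\chi_{R'}\|_{\bmo}$ from the elementary coefficient bound $|g_R|\le|R|^{1/2}\|g\|_{\BMO^d}$, and invokes Lebesgue differentiation — a two-line argument entirely in $N$ parameters. You instead iterate the mean-reduction of the preceding proposition down to a one-parameter multiplier $m_{I_1\times\cdots\times I_{N-1}}\phi\in\mathcal M(L^\infty(\T),\BMO^d(\T))$, identify that class with $L^\infty(\T)$ by $H^{1,d}$-duality against $h_I/|I|^{1/2}$, and then differentiate back up. Both routes are sound; the paper's is shorter and avoids any appeal to the $\BMO_m^d$ proposition, while yours has the minor virtue of making explicit that the $L^\infty$ conclusion already lives at the level of the weaker hypothesis $\phi\in\mathcal M(L^\infty(\T^N),\BMO^d(\T^N))$. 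One small point to keep in mind in your duality step: the pairing realizing $\BMO^d(\T)=(H^{1,d}(\T))^*$ is the Haar-coefficient pairing, so the identity $\sup_{\|f\|_\infty\le1}|\langle bf,g\rangle|=\|bg\|_{L^1}$ should be checked on the specific mean-zero test function $g=h_I/|I|^{1/2}$ (where it does hold) rather than asserted for general $g$.
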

\begin{proof}
Note that if $g\in \BMO^d(\T^N)$, then for any rectangle $R\in \D^N$, we have from the definition of $\BMO^d(\T^N)$ that $$\frac{1}{|R|^{1/2}}|g_R|\le \|g\|_{\BMO^d(\T^N)}.$$
It follows that if $\phi \in \mathcal M(\bmo(\T^N),\BMO^d(\T^N))$, then for any $R\in \D^N$ and any $f\in \bmo(\T^N)$,
\begin{equation}\label{eq:inftynecessity}
\frac{1}{|R|^{1/2}}|\left(\phi f\right)_R|\le \|\phi f\|_{\BMO^d(\T^N)}<\infty.
\end{equation}
Let us take $f=\chi_{R'}$ with $R'\in \D^N$ such that $R$ is a parent (in each direction) of $R'$. It comes that
$$\frac{1}{2^N}|m_{R'}\phi|= \frac{1}{|R|^{1/2}}|\left(\phi f\right)_R|\le \|\phi f\|_{\BMO^d(\T^N)}<\infty.$$
That is for any $R\in \D^N$, $|m_R\phi|<\infty$, hence by the Lebesgue Differentiation Theorem, $\phi\in L^\infty(\T^N)$.

To show that  $\phi \in \LMO^d(\T^N)$, note that given a dyadic rectangle $R = I_1 \times \dots \times I_N$ with $|I_1|= 2^{-k_1}, \dots,
|I_N|= 2^{-k_N}$,
we can construct a "dyadic logarithm" $\log_R=\sum_{j=1}^N\log_{I_j}$ where $\log_{I_j}(t)$ is provided in Lemma \ref{lem:testfunction} with the properties (by the same lemma)
 that $\log_R \in \bmo(\T^N)$, and $\log_R(t) \equiv k_1+ \cdots +k_N$ for $t\in R$ and
 $\|\log_R\|_{\bmo(\T^N)} \le C_N$, where $C_N$ is a constant that does not depend on $R$.

It follows that for any open set $\Omega \subset R$, we have
$$
    \|P_{\Omega} ( \phi \log_R)\|_{L^2(\T^N)}^2 = \left(k_1+\dots +k_N\right)^2  \|P_{\Omega} \phi\|_{L^2(\T^N)}^2.
$$
Consequently, $\|\phi\|_{\LMO^d(\T^N)}^2 \le C \|\phi\|^2_{\mathcal M(\bmo(\T^{N}), \BMO^d(\T^{N}))}$.
\end{proof}


We can now recall a result on the boundedness of the main paraproduct from $\bmo^d(\T^N)$ to $\BMO^d(\T^N)$ obtained in \cite{ps,sehba} and the boundedness of $\Delta_\phi$ on $\BMO^d(\T^N)$ from \cite{bp}.

\begin{theorem}   \label{thm:multothers}
Let $\phi \in L_0^2(\T^N)$, $\vec {\beta}=(\beta_1,\cdots,\beta_n)$, $\beta_j\in \{0,1\}$. Then

\begin{enumerate}
\item $\Pi^{(0, \dots,0)}_\phi = \Pi_\phi:\bmo^d(\T^N) \rightarrow \BMO^d(\T^N)$ is bounded, if and only if $ \phi \in \LMO^d(\T^N)$.
Moreover, $$\|\Pi_\phi\|_{\bmo^d(\T^N) \rightarrow \BMO^d(\T^N)} \approx\|\phi\|_{\LMO^d(\T^N)}.$$
\item $\Pi^{(1,\dots,1)}_\phi = \Delta_\phi: \BMO^d(\T^N) \rightarrow \BMO^d(\T^N)$ is bounded, if and only if $ \phi \in \BMO^d(\T^N)$.
Moreover, $$\|\Delta_\phi\|_{\BMO^d(\T^N) \rightarrow \BMO^d(\T^N)} \approx\|\phi\|_{\BMO^d(\T^N)}.$$
\end{enumerate}

\end{theorem}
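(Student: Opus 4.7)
The plan is to prove each equivalence by testing against explicit little-BMO functions for the necessity, and by a square-function/stopping-time decomposition for the sufficiency.

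\textbf{Part (1), necessity.} This follows the pattern of Theorem \ref{thm:forw}. For a dyadic rectangle $R = I_1 \times \cdots \times I_N$ with $|I_j| = 2^{-k_j}$, I would use the test function $\log_R := \sum_{j=1}^N \log_{I_j}$ from Lemma \ref{lem:testfunction}, which lies in $\bmo(\T^N)$ with norm bounded independently of $R$ and satisfies $m_Q \log_R \simeq k_1 + \cdots + k_N$ for every dyadic $Q \subseteq R$. Since $(\Pi_\phi \log_R)_Q = \phi_Q \, m_Q \log_R$, restricting the $\BMO^d$-supremum defining $\|\Pi_\phi \log_R\|_{\BMO^d}$ to open sets $\Omega \subseteq R$ yields
\[
\frac{(k_1 + \cdots + k_N)^2}{|\Omega|} \sum_{Q \subseteq \Omega} |\phi_Q|^2 \lesssim \|\Pi_\phi\|_{\bmo^d \to \BMO^d}^2,
\]
which by Proposition \ref{prop:LMOequivchar} is exactly the condition $\phi \in \LMO^d$ with the expected norm control.

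\textbf{Part (1), sufficiency.} Assuming $\phi \in \LMO^d$, one must bound $\frac{1}{|\Omega|}\sum_{R \subseteq \Omega}|\phi_R|^2 |m_R f|^2$ uniformly for open $\Omega$ and $f \in \bmo^d$. The first ingredient is the telescoping estimate $|m_R f| \lesssim L(R)\,\|f\|_{\bmo^d}$ with $L(R) := \log(4/|I_1|) + \cdots + \log(4/|I_N|)$, obtained by iterating the one-parameter $\BMO$ logarithmic growth of means through each coordinate and invoking the equivalent definition (\ref{eq:equivdefbmo}) of little $\BMO$. The second ingredient is a Journé-type stopping decomposition of $\Omega$ into maximal dyadic sub-rectangles $\{R_0\}$, with $\sum_{R_0}|R_0| = |\Omega|$; on each $R_0$, Proposition \ref{prop:LMOequivchar} applied with ambient rectangle $R_0$ absorbs the growth factor $L(R)^2$, and summing the contributions produces $\|\Pi_\phi\|_{\bmo^d \to \BMO^d} \lesssim \|\phi\|_{\LMO^d}$.

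\textbf{Part (2).} For sufficiency, given $\phi \in \BMO^d$, I would expand $\chi_R/|R|$ in the Haar basis as a telescoping sum over the dyadic ancestors of $R$; this rewrites the Haar coefficients of $\Delta_\phi f = \sum_R \phi_R f_R \chi_R/|R|$ as a paraproduct-type expression, whose $\BMO^d$-norm is controlled via the $H^{1,d}$-$\BMO^d$ duality pairing and Cauchy-Schwarz against the two $\BMO^d$ inputs. For necessity, apply the bounded operator $\Delta_\phi$ to a carefully chosen test function (say, a normalised Haar function or a projection $P_\Omega \phi$) and read off the Carleson estimate $\sum_{R \subseteq \Omega}|\phi_R|^2 \lesssim |\Omega|\|\Delta_\phi\|^2$ directly from the resulting $\BMO^d$ inequality.

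\textbf{Main obstacle.} The delicate step is the sufficiency in (1). While the pointwise bound $|m_R f| \lesssim L(R)\,\|f\|_{\bmo^d}$ is clean, reconciling it with the $\LMO^d$ characterization requires a genuine multiparameter stopping argument, since Proposition \ref{prop:LMOequivchar} is phrased relative to a fixed ambient dyadic rectangle whereas $\Omega$ is arbitrary; matching these two scales and handling the product-structure subtleties is the core technical content of \cite{ps,sehba}.
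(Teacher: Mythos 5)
First, note that the paper itself does not prove Theorem \ref{thm:multothers}: it is explicitly recalled from \cite{ps,sehba} (part (1)) and \cite{bp} (part (2)), so there is no internal proof to match. Your necessity argument for (1) is correct and is essentially the computation the paper performs in Theorem \ref{thm:forw} with the test function $\log_R$; nothing to object to there.

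The genuine gap is in your sufficiency argument for (1). Your plan is: bound $|m_Rf|\lesssim L(R)\|f\|_{\bmo^d}$ with $L(R)=\sum_j\log\frac{4}{|I_j|}$, and then control $\frac{1}{|\Omega|}\sum_{R\subseteq\Omega}L(R)^2|\phi_R|^2$ by a stopping-time use of Proposition \ref{prop:LMOequivchar}. This cannot work as described, because Proposition \ref{prop:LMOequivchar} attaches the weight $L(R_0)^2$ of the \emph{ambient} rectangle $R_0$, while your weight $L(R)^2$ grows without bound as $R$ shrinks inside $R_0$; no decomposition of $\Omega$ into maximal rectangles closes that unbounded ratio. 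In fact the weighted Carleson condition $\sup_{\Omega}\frac{1}{|\Omega|}\sum_{R\subseteq\Omega}L(R)^2|\phi_R|^2<\infty$ is \emph{strictly stronger} than $\LMO^d$, already for $N=1$: take $\phi$ with $|\phi_J|^2=|J|\,(1+\log_2(1/|J|))^{-3}$ for every dyadic $J$. Then for $|I|=2^{-n}$ one has $\sum_{J\subseteq I}|\phi_J|^2\simeq 2^{-n}(n+1)^{-2}$, so $\phi\in\LMO^d(\T)$, while
\begin{equation*}
\frac{1}{|I|}\sum_{J\subseteq I}\Bigl(\log\tfrac{4}{|J|}\Bigr)^2|\phi_J|^2\simeq\sum_{m\ge n}\frac{1}{m+1}=\infty .
\end{equation*}
So your two-step route proves boundedness only under a hypothesis stronger than the one in the statement, and cannot give the stated ``if'' direction. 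The actual proofs (Stegenga's one-parameter argument, and \cite{ps,sehba} in the multiparameter case; see also the two-parameter computation reproduced at the end of this source file) avoid the pointwise logarithmic bound on $|m_Rf|$ altogether: one writes $m_Rf=m_R(\chi_\Omega f)$ for $R\subseteq\Omega$, so that $\sum_{R\subseteq\Omega}|\phi_R|^2|m_Rf|^2=\|\Pi_{P_\Omega\phi}(\chi_\Omega f)\|_2^2\lesssim\|P_\Omega\phi\|_{\BMO^d}^2\|\chi_\Omega f\|_2^2$, and then plays the decay $\|P_\Omega\phi\|_{\BMO^d}\lesssim\|\phi\|_{\LMO^d}/\log(\cdots)$ against the John--Nirenberg growth $\|\chi_\Omega f\|_2^2\lesssim|\Omega|\log^2(\cdots)\|f\|^2$; the little-$\bmo$ structure of $f$ is what allows the reduction to the rectangle-based characterization of $\LMO^d$. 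Your sketch of (2) is also too thin to assess as a proof -- in particular, testing $\Delta_\phi$ on $P_\Omega\phi$ gives $\sum_{R\subseteq\Omega}|\phi_R|^2$ as an \emph{integral} of the output, which is not controlled by its $\BMO^d$ norm, so the stated necessity argument does not close -- but the decisive defect is the sufficiency in (1).
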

Let us observe the following about the boundedness of the other paraproducts.
\begin{proposition}\label{prop:multiothers}
For $\vec {\beta}\neq \vec {\bf 0}=(0,\cdots,0),  \vec {\bf 1}=(1,\cdots,1)$,\\
$\Pi^{\vec {\beta}}_\phi:\BMO_m^d(\T^N) \rightarrow \BMO^d(\T^N)$ is bounded if $ \phi \in \BMO^d(\T^N)$.
Moreover, $$\|\Pi^{\vec {\beta}}_\phi\|_{\BMO_m^d(\T^N) \rightarrow \BMO^d(\T^N)}
  \lesssim \|\phi\|_{\BMO^d(\T^N)}.$$
\end{proposition}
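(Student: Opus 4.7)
The plan is to expose $\Pi^{\vec{\beta}}_\phi$ as a composition acting separately on the two groups of variables determined by $\vec{\beta}$, and reduce the $\BMO^d$ estimate to a coordinate-wise estimate handled by the already known bounds on $\Delta_\psi$. Write $R=S\times T$ with $S\in \D^{N_1}$ collecting the coordinates where $\beta_j=0$ and $T\in \D^{N_2}$ the coordinates where $\beta_j=1$. A direct rearrangement of the defining series gives
\[
\Pi^{\vec{\beta}}_\phi f(s,t)=\sum_{S\in \D^{N_1}} h_S(s)\,\Delta^{(t)}_{\phi_S}\bigl(m_S^{(s)}f\bigr)(t),
\]
where $\phi_S(t):=\int_{\T^{N_1}}\phi(s,t)h_S(s)\,ds$ carries the Haar coefficients $(\phi_S)_T=\phi_{S\times T}$, $m_S^{(s)}f(t):=|S|^{-1}\int_S f(s,t)\,ds$ is the partial mean in the $s$-variables, and $\Delta^{(t)}$ denotes the diagonal paraproduct acting only in the $t$-variables. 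The Haar coefficient at $S'\times T'$ then factors cleanly: $(\Pi^{\vec{\beta}}_\phi f)_{S'\times T'}=\bigl(\Delta^{(t)}_{\phi_{S'}}(m_{S'}^{(s)}f)\bigr)_{T'}$.

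Given an open set $\Omega\subset \T^N$, I would set $V_{S'}:=\bigcup\{T'\in \D^{N_2}: S'\times T'\subset \Omega\}\subset \T^{N_2}$, so that $T'\subset V_{S'}$ exactly when $S'\times T'\subset \Omega$. The decoupling above yields
\[
\sum_{S'\times T'\subset \Omega}|(\Pi^{\vec{\beta}}_\phi f)_{S'\times T'}|^2=\sum_{S'}\|P_{V_{S'}}\Delta^{(t)}_{\phi_{S'}}(m_{S'}^{(s)}f)\|_{L^2(\T^{N_2})}^2.
\]
Invoking Theorem~\ref{thm:multothers}(2) on $\T^{N_2}$ together with the $\BMO_m^d$ bound $\|m_{S'}^{(s)}f\|_{\BMO^d(\T^{N_2})}\le \|f\|_{\BMO_m^d}$, each inner norm is controlled by $|V_{S'}|\,\|\phi_{S'}\|_{\BMO^d(\T^{N_2})}^2\,\|f\|_{\BMO_m^d}^2$.

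The technical heart of the argument is then to close the sum over $S'$, i.e.\ to prove the ``fibred'' Carleson estimate
\[
\sum_{S'}|V_{S'}|\,\|\phi_{S'}\|_{\BMO^d(\T^{N_2})}^2\lesssim |\Omega|\,\|\phi\|_{\BMO^d(\T^N)}^2.
\]
The naive per-$S'$ bound $\|\phi_{S'}\|^2_{\BMO^d(\T^{N_2})}\le |S'|\|\phi\|^2_{\BMO^d(\T^N)}$ is too crude and leaves the divergent factor $\sum_{S'}|S'||V_{S'}|$, so I would swap the order of summation, absorb the $\BMO^d(\T^{N_2})$ suprema defining the $\|\phi_{S'}\|^2_{\BMO^d}$ into a single Carleson covering of $\Omega$ by dyadic rectangles, and recognise the remaining double sum as the product $\BMO^d$ Carleson sum $\sum_{R\subset \Omega}|\phi_R|^2\le |\Omega|\|\phi\|_{\BMO^d(\T^N)}^2$. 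An equivalent (and probably cleaner) route is $\BMO^d$--$H^{1,d}$ duality: rewriting $\langle \Pi^{\vec{\beta}}_\phi f,g\rangle=\langle \phi,\Psi_{f,g}\rangle$ for test $g\in H^{1,d}(\T^N)$, where $(\Psi_{f,g})_{S\times T}=(m_S^{(s)}f)_T(m_T^{(t)}g)_S$, and then proving $\|\Psi_{f,g}\|_{H^{1,d}}\lesssim \|f\|_{\BMO_m^d}\|g\|_{H^{1,d}}$ via a Fubini/square-function argument in $(s,t)$. Either way, combining the fibred estimate with the inner bound delivers $\|\Pi^{\vec{\beta}}_\phi\|_{\BMO_m^d\to\BMO^d}\lesssim \|\phi\|_{\BMO^d}$, as claimed.
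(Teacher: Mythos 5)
Your slicing of $\Pi^{\vec\beta}_\phi$ into the fibres $S'\mapsto \Delta^{(t)}_{\phi_{S'}}(m_{S'}^{(s)}f)$ and the reduction of the Carleson sum over $\Omega$ to $\sum_{S'}\|P_{V_{S'}}\Delta^{(t)}_{\phi_{S'}}(m_{S'}^{(s)}f)\|_{L^2(\T^{N_2})}^2$ is exactly the decomposition the paper uses. The problem is the step you yourself label the technical heart: the fibred Carleson estimate $\sum_{S'}|V_{S'}|\,\|\phi_{S'}\|^2_{\BMO^d(\T^{N_2})}\lesssim|\Omega|\,\|\phi\|^2_{\BMO^d(\T^N)}$ is not proved in your sketch, and it is in fact \emph{false}, so no reordering of sums can rescue it. Take $N=2$, $\Omega=\T^2$ (so every $V_{S'}=\T$) and $\phi=\sum_{j=1}^{m}\sum_{|S|=2^{-j}}2^{-j}\,h_S\otimes h_{V_j}$, where $V_1,\dots,V_m$ are pairwise disjoint dyadic intervals with $|V_j|=2^{-j}$. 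The rectangles carrying Haar mass are the pairwise disjoint $S\times V_j$, and each satisfies $|\phi_{S\times V_j}|^2=4^{-j}=|S\times V_j|$, so $\sum_{R\subseteq U}|\phi_R|^2\le|U|$ for every open $U$ and hence $\|\phi\|^2_{\BMO^d(\T^2)}\le 1$; yet $\|\phi_S\|^2_{\BMO^d(\T)}=2^{-j}$ for each of the $2^j$ intervals $S$ of length $2^{-j}$, so $\sum_{S}\|\phi_S\|^2_{\BMO^d(\T)}=m\to\infty$. The alternative duality route fares no better as sketched: you would need $\|\Psi_{f,g}\|_{H^{1,d}}\lesssim\|f\|_{\BMO_m^d}\|g\|_{H^{1,d}}$, which is a genuinely different statement from the $L^2\times L^2\to H^{1,d}$ bound that the Fubini/square-function argument (Lemma \ref{lem:othersparaL2}) actually yields.

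The repair is a one-line change, and it is precisely what the paper's proof does: the diagonal paraproduct is symmetric in symbol and argument, $\Delta_\psi g=\Delta_g\psi$, and only the coefficients $\phi_{S'\times T}$ with $T\subseteq V_{S'}$ contribute to $P_{V_{S'}}\Delta^{(t)}_{\phi_{S'}}(m_{S'}f)$, so $\|P_{V_{S'}}\Delta^{(t)}_{\phi_{S'}}(m_{S'}f)\|_2\le\|\Delta^{(t)}_{m_{S'}f}(P_{V_{S'}}\phi_{S'})\|_2\lesssim\|m_{S'}f\|_{\BMO^d(\T^{N_2})}\|P_{V_{S'}}\phi_{S'}\|_{L^2(\T^{N_2})}\le\|f\|_{\BMO_m^d(\T^N)}\|P_{V_{S'}}\phi_{S'}\|_{L^2(\T^{N_2})}$, using the $L^2$-boundedness of $\Delta_\psi$ rather than its $\BMO^d\to\BMO^d$ boundedness. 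The remaining sum is then exactly $\sum_{S'}\|P_{V_{S'}}\phi_{S'}\|^2_{L^2(\T^{N_2})}=\sum_{S'\times T'\subseteq\Omega}|\phi_{S'\times T'}|^2=\|P_\Omega\phi\|_2^2\le|\Omega|\,\|\phi\|^2_{\BMO^d(\T^N)}$, which closes the argument. In short, the hypothesis $f\in\BMO_m^d$ must be spent on the operator norm of the fibre paraproduct and the hypothesis $\phi\in\BMO^d$ on the product Carleson sum; your assignment of roles is the reverse, and that is what forces the false lemma.
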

\begin{proof} We suppose that $\phi\in \BMO^d(\T^N)$. Let $b\in \BMO_m^d(\T^N)$, and $\Omega$ an open set in $\T^N$. For any rectangle $T\in \D^{N_2}$, $0<N_2<N$, we write $\Omega_T:=\cup_{S\in \D^{N_1}, S\times T\subseteq \Omega}S$, $N_1+N_2=N$. We easily obtain
\Beas
\|P_\Omega\Pi_\phi^{\vec {\beta}}b\|_{L^2(\T^N)}^2 &=& \|P_\Omega \sum_{S\in \D^{N_1}}\sum_{T\in \D^{N_2}}\frac{\chi_S}{|S|}h_T\phi_{S\times T}m_Tb_S\|_{L^2(\T^N)}^2\\ &=& \sum_{T\in \D^{N_2}}\| \sum_{S\in \D^{N_1}, S\subseteq \Omega_T}\frac{\chi_S}{|S|}\phi_{S\times T}m_Tb_S\|_{L^2(\T^{N_1})}^2\\ &=& \sum_{T\in \D^{N_2}}\| \Delta_{m_Tb}P_{\Omega_T}\phi_T\|_{L^2(\T^{N_1})}^2\\ &\lesssim& \sum_{T\in \D^{N_2}}\| m_Tb\|_{\BMO^d(\T^{N_1})}^2\| P_{\Omega_T}\phi_T\|_{L^2(\T^{N_1})}^2\\ &\lesssim& \| b\|_{\BMO_m^d(\T^{N})}^2\sum_{T\in \D^{N_2}}\|P_{\Omega_T}\phi_T\|_{L^2(\T^{N_1})}^2\\ &\lesssim& \| b\|_{\BMO_m^d(\T^{N})}^2\|P_{\Omega}\phi\|_{L^2(\T^{N})}^2\\ &\lesssim& |\Omega|\| b\|_{\BMO_m^d(\T^{N})}^2\| \phi\|_{\BMO^d(\T^{N})}^2.
\Eeas
\end{proof}

We can now prove the following.
\begin{theorem}  \label{thm:maindy}
   Let $\phi \in L^2(\T^N)$. Then $M_\phi$ defines a bounded operator from $\bmo^d(\T^N)$ to $\BMO^d(\T^N)$, if and only if
   $ \phi \in L^\infty(\T^N)\cap \LMO^d(\T^N)$.
\end{theorem}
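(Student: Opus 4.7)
The necessity direction is already proved in Theorem \ref{thm:forw}, so the plan addresses only sufficiency: given $\phi \in L^\infty(\T^N) \cap \LMO^d(\T^N)$ and $f \in \bmo^d(\T^N)$, one must show $\phi f \in \BMO^d(\T^N)$ with norm controlled by $\|\phi\|_{L^\infty \cap \LMO^d}\|f\|_{\bmo^d}$. I would proceed by applying the Haar-basis decomposition (\ref{eq:expanmuloper}),
$$\phi f = \sum_{(\veps, \vda, \vba) \in \mathcal U} B_{\veps, \vda, \vba}(\phi, f),$$
and bounding each of the $3^N$ summands in $\BMO^d$ by $C\|\phi\|_{L^\infty \cap \LMO^d}\|f\|_{\bmo^d}$. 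These summands naturally split into three classes according to the pair $(\veps, \vda)$.

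When $\veps = \vec 0$ the summand is a standard paraproduct $\Pi_\phi^{\vba} f$. Theorem \ref{thm:multothers}(1) handles the main paraproduct case $\vba = \vec 0$ using $\phi \in \LMO^d$; Theorem \ref{thm:multothers}(2) handles $\Delta_\phi$ (the case $\vba = \vec 1$), where $f$ is first promoted from $\bmo^d$ to $\BMO^d$ via Lemma \ref{lem:bmoBMOembeddings} and $\LMO^d \subset \BMO^d$; Proposition \ref{prop:multiothers} covers the remaining $\vba \neq \vec 0, \vec 1$, combined with the embedding $\bmo^d \hookrightarrow \BMO_m^d$ noted after the definition of $\BMO_m^d$. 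When $\vda = \vec 0$ and $\veps \neq \vec 0$ I would exploit the symmetry $B_{\veps, \vda, \vba}(\phi, f) = B_{\vda, \veps, \vba}(f, \phi)$ built into definition (\ref{paraprodgene2}) to rewrite the summand as $\Pi_f^{\vec 1 - \veps}\phi$. Since $f \in \bmo^d \subset \BMO^d$ and $\phi \in L^\infty \subset \BMO_m^d$, Lemma \ref{lem:inftyBMO} (for $\veps = \vec 1$) and Proposition \ref{prop:multiothers} (for strictly mixed $\veps$) both apply.

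The remaining summands, with both $\veps \neq \vec 0$ and $\vda \neq \vec 0$, are the truly mixed operators and form the main technical obstacle: they are not standard paraproducts with either $\phi$ or $f$ as symbol, and the symmetry trick no longer reduces them to a covered case. For each such triple I would split the coordinates into $A = \{j: \epsilon_j = 1\}$, $B = \{j: \delta_j = 1\}$, $C = \{j: \beta_j = 1\}$ and estimate via $H^{1,d}$--$\BMO^d$ duality: testing against $g \in H^{1, d}(\T^N)$ gives
$$\langle B_{\veps, \vda, \vba}(\phi, f), g\rangle = \sum_R \langle \phi, h_R^{\veps}\rangle \langle f, h_R^{\vda}\rangle \langle g, h_R^{\vba}\rangle,$$
which I would control by using $\|\phi\|_\infty$ to absorb the averaged factors in the $A$-coordinates, Lemma \ref{lem:estim} together with $\|f\|_{\bmo^d}$ in the $B$-coordinates, and the dyadic square-function characterization of $H^{1,d}$ for the $g$ factor. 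An alternative I would keep in reserve is an induction on $N$ that peels off a single coordinate at a time, exploiting that slices of $\phi \in L^\infty \cap \LMO^d$ remain uniformly in $L^\infty \cap \LMO^d$ of lower dimension. Once each summand is bounded by $C\|\phi\|_{L^\infty \cap \LMO^d}\|f\|_{\bmo^d}$, summing over $\mathcal U$ completes the proof.
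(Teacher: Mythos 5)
Your overall architecture matches the paper's: necessity from Theorem \ref{thm:forw}, then the decomposition (\ref{eq:expanmuloper}) into $3^N$ operators, with the $\veps=\vec 0$ terms dispatched by Theorem \ref{thm:multothers} and Proposition \ref{prop:multiothers}, and the $\vda=\vec 0$, $\veps\neq\vec 0$ terms handled correctly (your symmetry reduction to $\Pi_f^{\vec 1-\veps}\phi$ with $f\in\bmo^d\subset\BMO^d$ as symbol and $\phi\in L^\infty\subset\bmo^d\subset\BMO_m^d$ as argument is a clean, valid way to recover the paper's Lemmas \ref{lemm:notbad1} and \ref{lemm:notbad2}). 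The gap is in the genuinely mixed operators, i.e.\ the paper's $T_\phi^3$ and $T_\phi^4$, which you yourself flag as the main obstacle but then dispose of in one sentence. The duality plan you sketch is essentially circular: bounding $\sum_{S,T}(m_T\phi_S)(m_Sf_T)\,g_{S\times T}$ for all $g\in H^{1,d}(\T^N)$ amounts to verifying the product Carleson packing condition $\sum_{S\times T\subseteq\Omega}|m_T\phi_S|^2|m_Sf_T|^2\lesssim|\Omega|$ for arbitrary open $\Omega$, which \emph{is} the statement $T_\phi^3f\in\BMO^d(\T^N)$. The hypotheses only give you two iterated one-parameter Carleson conditions (in $S$ uniformly in $T$ from $\phi\in\BMO_m^d$, and in $T$ uniformly in $S$ from $f\in\bmo^d$), and in product theory these do not combine to yield the condition over general open sets --- only over rectangles. "Absorbing the averaged factors with $\|\phi\|_\infty$" does not help either, since $|m_T\phi_S|\le|S|^{1/2}\|\phi\|_\infty$ is far too lossy to sum over all generations of $S$.

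What the paper actually does for these terms is substantially more involved and is the real content of the theorem: it reduces $T_\phi^3b\in\BMO^d(\T^N)$ to the $L^2$-boundedness of the paraproduct $\Pi_{T_\phi^3b}$, approximates by $\Pi_{T_\phi^3b}E_k^{(1)}$, invokes the identity $\|\Pi_gE_k^{(1)}\|_{L^2\to L^2}=\|\Pi_{\tilde g}\|_{L^2\to L^2}$ from \cite{ps} (where $\tilde g$ keeps coefficients above level $k$ in the first variable, square-sums them at level $k$, and kills the rest), and then splits $\phi=E_k\phi+Q_k\phi$. The $Q_k$ piece is controlled on rectangles and extended to open sets via maximal rectangles $\mathcal M_k(\Omega)$; the $E_k$ piece is controlled by slicing, the one-parameter paraproduct bound, and crucially Lemma \ref{lem:estim}, which converts $\sum_T|T|\,|\Omega_T|$ back into $|\Omega|$. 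Proposition \ref{prop:notbad4} repeats this with the extra $\chi_S/|S|$ factor using Lemma \ref{lem:othersparaL2}. None of this machinery (nor any workable substitute) appears in your proposal, and your fallback induction on $N$ is not developed and would face the same obstruction, so the sufficiency direction remains unproved for the mixed terms.
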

\begin{proof} The forward direction follows immediately from Theorem \ref{thm:forw}. For the reverse direction, we decompose
the multiplication operator $M_\phi: \bmo^d(\T^N) \rightarrow \BMO^d(\T^N)$ into a sum of dyadic operators as given in (\ref{eq:expanmuloper}):
$$M_\phi =\sum_{(\veps,\vda,\vba)\in \mathcal U} B_{\veps,\vda,\vba}(\phi,\cdot).$$
We next prove the boundedness of each individual operator $B_{\veps,\vda,\vba}(\phi,\cdot)$. The case $\veps=\vec {0}$ is handled by Theorem \ref{thm:multothers} and Proposition \ref{prop:multiothers}.  The other operators are of the following forms:
$$T_{\phi}^1f(t):=\sum_{R\in \RR}(m_R\phi )f_R h_R(t),\,\,\,t\in \T^N;$$
$$T_{\phi}^2f(t):=\sum_{S\in \D^{N_1}, T\in \D^{N_2}}\left(m_T\phi_S\right) f_{S\times T}\frac{\chi_S(t_S)}{|S|} h_T(t_T),\,\,\,t=(t_S,t_T)\in \T^{N_1}\times \T^{N_2}=\T^N;$$
$$T_{\phi}^3f(t):=\sum_{S\in \D^{N_1}, T\in \D^{N_2}}\left(m_T\phi_S\right) \left(m_Sf_ T\right)h_S(t_S) h_T(t_T),\,\,\,t=(t_S,t_T)\in \T^{N_1}\times \T^{N_2}=\T^N$$
and
$$T_{\phi}^4f(t):=\sum_{R \subseteq \T^{N_1}, S \subseteq \T^{N_2}, T \subseteq \T^{N_3}}
        h_R(t_{J_1}) \frac{\chi_S(t_{J_2})}{|S|} h_T(t_{J_3}) m_T \phi_{S \times R} m_R b_{S \times T},$$
        $t=(t_{J_1},t_{J_2},t_{J_3})\in \T^{N_1}\times \T^{N_2}\times \T^{N_3}=\T^N.$

\begin{lemma} \label{lemm:notbad1}
Let $\phi \in L^2(\T^N)$. Then
 $T_{\phi}^1$
extends as a bounded linear operator from $\bmo^d(\T^N)$ to $\BMO^d(\T^N)$ if and only if  $\phi \in L^\infty(\T^N)$, and
$\| T_\phi^1\|_{\bmo^d(\T^N) \rightarrow \BMO^d(\T^N)} \approx \|\phi \|_{L^\infty(\T^N)}$.
\end{lemma}
\begin{proof}
Note that the operator $T_\phi^1$ is a Haar multiplier with the sequence $\{m_T \phi\}_{ T \in \D^N}$, hence it is bounded on $\BMO^d(\T^N)$ if and only if $\phi \in L^\infty(\T^N)$ with $\| T_\phi^1\|_{\BMO^d (\T^N)\rightarrow \BMO^d(\T^N)} \approx \|\phi \|_{L^\infty(\T^N)}$. Thus we only have to prove that the condition $\phi \in L^\infty(\T^N)$ is necessary. Suppose that $T_\phi^1$ is bounded from $\bmo^d(\T^N)$ to $\BMO(\T^N)$. Then for any dyadic rectangle $R\in \T^N$, and any $f\in \bmo^d(\T^N)$, we have $$\frac{1}{|R|^{1/2}}|\left(\phi f\right)_R|\le \|T_\phi^1f\|_{\BMO^d(\T^N)}\le \|T_\phi^1\|_{\bmo^d(\T^N)\rightarrow \BMO^d(\T^N)}\|f\|_{\bmo^d(\T^N)}.$$
Taking $f(t)=|R|^{1/2}h_R(t)$ which is uniformly in $\bmo^d(\T^N)$, we obtain that for any dyadic rectangle $R\in \T^N$,
$$|m_R\phi|\le \|T_\phi^1\|_{\bmo^d(\T^N)\rightarrow \BMO^d(\T^N)}.$$
Hence $\|\phi \|_{L^\infty(\T^N)}\le \|T_\phi^1\|_{\bmo^d(\T^N)\rightarrow \BMO^d(\T^N)}$.
\end{proof}

We now estimate the operators $T_\phi^2$.
\begin{lemma} \label{lemm:notbad2}
Let $\phi \in \BMO_m^d(\T^N)$.
Then the operator $T_\phi^2$  defines a bounded linear operator on $\BMO^d(\T^N)$ with
$\| T_\phi^2\|_{\BMO^d(\T^N) \rightarrow \BMO^d(\T^N)} \lesssim \|\phi \|_{\BMO_m^d(\T^N)}$.
\end{lemma}
\begin{proof} We first prove that if $\phi \in \BMO_m^d(\T^N)$, then $T_\phi^2$ is bounded on $L^2(\T^N)$. Let $f\in L^2(\T^N)$. Then
\begin{eqnarray*}
   \| T_{\phi}^2f\|_{L^2(\T^N)}^2
   &=& \|\sum_{S\in \D^{N_1}, K\in \D^{N_2}}
        h_K(t_{K}) \frac{\chi_S(t_{S})}{|S|}  (m_K \phi_{S}) f_{K \times S}\|_{L^2(\T^N)}^2 \\
  &\le&  \sum_{K\in \D^{N_2}}\|\sum_{S\in \D^{N_1}}\frac{\chi_S(t_{S})}{|S|}  (m_K \phi_{S}) f_{K \times S}\|_{L^2(\T^{N_1})}^2\\
  &=& \sum_{K\in \D^{N_2}}\|\Delta_{m_K\phi}f_K\|_{L^2(\T^{N_1})}^2\\ &\le&
  \sum_{K\in \D^{N_2}}\|m_K\phi\|_{\BMO^d(\T^{N_1})}^2\|f_K\|_{L^2(\T^{N_1})}^2\\ &\le&
  \|\phi\|_{\BMO_m^d(\T^N)}^2\sum_{K\in \D^{N_2}}\|f_K\|_{L^2(\T^{N_1})}^2\\ &=& \|\phi\|_{\BMO_m^d(\T^N)}^2\|f\|_{L^2(\T^{N})}^2.
\end{eqnarray*}
Next for $f \in \BMO^d(\T^N)$ and for $\Omega \subseteq \T^N$ open, we obtain from the above estimate
\begin{eqnarray*}
   \| P_{\Omega}  T_{\phi}^2f\|_{L^2(\T^{N})}^2
   &=& \|P_{\Omega} \sum_{S\in \D^{N_1}, K\in \D^{N_2}}
        h_K(t_{K}) \frac{\chi_S(t_{S})}{|S|}  (m_K \phi_{S}) f_{K \times S}\|_{L^2(\T^{N})}^2 \\
   &=& \|\sum_{S\in \D^{N_1}, K\in \D^{N_2}, K \times S \subseteq \Omega}
        h_K(t_{K}) \frac{\chi_S(t_{S})}{|S|}  (m_K \phi_{S}) f_{K \times S}\|_{L^2(\T^{N})}^2 \\ &=& \|   T_{\phi}^2P_\Omega f\|_{L^2(\T^{N})}^2\\ &\lesssim&  \|\phi\|_{\BMO_m^d(\T^N)}^2\|P_\Omega f\|_{L^2(\T^{N})}^2\\ &\le& |\Omega|\|\phi\|_{\BMO_m^d(\T^N)}^2\|f\|_{\BMO^d(\T^{N})}^2.
\end{eqnarray*}

Thus $\| T_\phi^2\|_{\BMO^d(\T^N) \rightarrow \BMO^d(\T^N)} \lesssim \|\phi \|_{\BMO_m^d(\T^N)}$. The proof is complete.
\end{proof}

Let us recall that $T_\phi^3$ is given by
$$T_{\phi}^3f(t):=\sum_{S\in \D^{N_1}, K\in \D^{N_2}}\left(m_K\phi_S\right) \left(m_Sf_ K\right)h_S(t_S) h_K(t_K),\,\,\,t=(t_S,t_K)\in \T^{N_1}\times \T^{N_2}=\T^N.$$ 
We have the following result whcih provides an equivalent definition of the mean $BMO$ space.
\begin{proposition}\label{prop:notbad3}
Let $\phi\in L^2(\T^N)$. Then the operator $T_\phi^3$ is bounded from $\bmo^d(\T^N)$ to $\BMO^d(\T^N)$ if and only if $\phi\in \BMO_m^d(\T^N)$. Moreover, $$\|T_\phi^3\|_{\bmo^d(\T^N)\rightarrow \BMO^d(\T^N)}\backsimeq \|\phi\|_{\BMO_m^d(\T^N)}.$$
\end{proposition}
\begin{proof}
We first suppose that the operator $T_\phi^3$ is bounded from $\bmo^d(\T^N)$ to $\BMO^d(\T^N)$. Let $N_2\in \{1,2,\cdots,N-1\}$ be fixed. For $R\in \mathcal D^{N_2}$, consider the function
$$f(t,s)==|R|^{1/2}h_R(s),\,\,\, s\in \T^ {N_2},\,\,\,t\in \T^{N-{N_2}}.$$ It is clear that the function $f$ is a uniformly bounded function so it belongs to $\bmo^d(\T^N)$.
\medskip
We suppose that our given $R$ is a rectangle in the last ${N_2}$ variables. We can also suppose without loss of generality that in our operators $T_\phi^3$, $S$ is a rectangle in the first $N_1$ variables and $K$ a rectangle in the last $N_2$ variables. We easily obtain that
$$T_\phi^3f(t,s)=\sum_{S\in \D^{N_1}}|R|^{1/2}\left(m_R\phi_S\right) h_S(t) h_R(s).$$
It follows that for any open set $\Omega\subset \T^{N_1}$,
\Beas
|R|\|P_\Omega(m_R\phi)\|_{L^2(\T^{N_1})}^2 &=& \|P_{R\times\Omega}(T_\phi^3\,f)\|_{L^2(\T^{N})}^2\\ &\le& |R\times \Omega|\|T_\phi^3f\|_{BMO^d(\T^N)}^2\\ &\lesssim& |R\times \Omega|\|f\|_{bmo^d(\T^N)}^2\lesssim |R\times \Omega|.
\Eeas
That is for any open set $\Omega\subset \T^{N_1}$,
$$\|P_\Omega(m_R\phi)\|_{L^2(\T^{N_1})}^2\lesssim |\Omega|.$$
Hence for any $1\le N_2<N$, for all $R\in \mathcal D^{N_2}$, $m_R\phi\in BMO^d(\T^{N-N_2})$. Thus $\phi\in BMO_m^d(\T^N)$.
\vskip .2cm
We next suppose that $\phi\in \BMO_m^d(\T^N)$ and prove that each $T_\phi^3$ is bounded from $\bmo^d(\T^N)$ to $\BMO^d(\T^N)$. That is we check that given $\phi$ as above and for any $b\in \bmo^d(\T^N)$, $\Pi_{T_\phi^3b}:L^2(\T^N)\rightarrow L^2(\T^N)$ boundedly, that is there exists $C>0$ such that for any $f\in L^2(\T^N)$,
\begin{equation}\label{eq:strongnotbad3}\|F:=\Pi_{T_\phi^3b}f\|_{L^2(\T^N)}\le C\|f\|_{L^2(\T^N)}.\end{equation}
\vskip .2cm
For $k\in \mathbb N$, and for $f\in L^2(\T^N)$, define $F_k:=\Pi_{T_\phi^3b}E_k^{(1)}f$. Then one has that for any $g\in L^2(\T^N)$,
$$\lim_{k\rightarrow \infty}\int_{\T^N}F_k(t)g(t)dt=\int_{\T^N}F(t)g(t)dt.$$
It follows that to obtain (\ref{eq:strongnotbad3}), it is enough to prove that for any $k\in \mathbb N$,
$$\|\Pi_{T_\phi^3b}E_k^{(1)}f\|_{L^2(\T^N)}\le C\|\phi\|_{\BMO_m^d(\T^{N})}\|b\|_{\bmo^d(\T^{N})}\|f\|_{L^2(\T^N)}$$
where the constant $C>0$ does not depend on $k$.
That is $$\|\Pi_{T_\phi^3b}E_k^{(1)}\|_{L^2(\T^N)\rightarrow L^2(\T^N)}\le C\|\phi\|_{\BMO_m^d(\T^{N})}\|b\|_{\bmo^d(\T^{N})}.$$
But following \cite{ps}, we have $$\|\Pi_{T_\phi^3b}E_k^{(1)}\|_{L^2(\T^N)\rightarrow L^2(\T^N)}=\|\Pi_{\widetilde {T_\phi^3b}}\|_{L^2(\T^N)\rightarrow L^2(\T^N)}$$
where for $g\in L^2(\T^N)$, $\tilde {g}$ is defined as follows:
 for $R=\prod_{j=1}^NR_j=R_1\times S\in \D^N$, $S=R_2\times\cdots\times R_N\in \D^{N-1}$,
$$
   (\tilde {g})_{R} =\left\{ \begin{matrix} g_{R} & \text{ if } & |R_1|
   > 2^{-k}\\
 (\sum_{R_1' \subseteq R_1} |g_{R_1'\times S}|^2)^{1/2}   & \text{ if } &
   |R_1| = 2^{-k}\\
      0 & \text{ otherwise. }&
                               \end{matrix} \right.
$$
Hence, it is enough to prove that $\widetilde {T_\phi^3b}$ is uniformly in $\BMO^d(\T^N)$, that is its $\BMO$-norm is bounded by some constant that does not depend on $k$.
\vskip .2cm
Let us write $E_k$ and $Q_k$ for $E_k^{(1)}$ and $Q_k^{(1)}$ respectively. Clearly
\begin{equation*}
  \widetilde {T_\phi^3b}  =  \widetilde {T_{E_k\phi}^3b} + \widetilde {T_{Q_k\phi}^3b}=
     I + II.
\end{equation*}

Let us first consider the second term $II$. We observe that $\widetilde {T_{Q_k\phi}^3b}$ has only nontrivial Haar coefficients for $R=R_1\times\cdots\times R_N \in \RR$, with $|R_1|=2^{-k}$. We first compute $\|P_R \widetilde {T_{Q_k\phi}^3b}\|_{L^2(\T^N)}^2$
for $R\in \RR$ a rectangle of this type. We write $R=K\times L$, $K\in \D^{N_1}$ and $L\in \D^{N_2}$.
\begin{eqnarray*}
  \int_R |P_R  \widetilde {T_{Q_k\phi}^3b}  |^2 ds dt
&\le& \sum_{S \subseteq K}\sum_{T\subseteq L}
  |m_S\phi_{T}|^2 |m_{T} b_S|^2 \\
&=&  \sum_{S\subseteq K}\|\Pi_{P_L m_S\phi} \chi_L b_S\|^2_{L^2(\T^{N_2})} \\
& \lesssim &   \sum_{S\subseteq K}\|P_L m_S\phi\|^2_{\BMO^d(\T^{N_2})} \| \chi_L b_S\|_{L^2(\T^{N_2})}^2 \\
&\lesssim &    \| \phi\|^2_{\BMO_m^d(\T^N)} \sum_{S\subseteq K} \| \chi_L b_S\|_{L^2(\T^{N_2})}^2\\ &=& \| \phi\|^2_{\BMO_m^d(\T^N)}\| \chi_L P_K b\|_{L^2(\T^{N})}^2\\ &\lesssim& |K||L|\| \phi\|^2_{\BMO_m^d(\T^N)}\| b\|^2_{\bmo^d(\T^N)}
\end{eqnarray*}
where we use Lemma \ref{lem:estim} at the last inequality.

Now for $\Omega\subseteq \T^N$ open, we denote by $\mathcal {M}_k(\Omega)$ the set of all rectangles $R=R_1\times\cdots\times R_N\in \RR,\,\,\, R\subseteq \Omega$ which are maximal in $\Omega$ with respect to $|R_1|=2^{-k}$.
If $\mathcal {M}_k(\Omega)=\emptyset$, then
$$\|P_\Omega \widetilde {T_{Q_k\phi}^3b}\|_{L^2(\T^N)}^2=0\le |\Omega|\| \phi\|^2_{\BMO_m^d(\T^N)} \|b\|^2_{\bmo^d(\T^N)}.$$

If $\mathcal {M}_k(\Omega)\neq \emptyset$, then using the above estimate of the $L^2-$norm of $P_R \widetilde {T_{Q_k\phi}^3b}$ for $R=R_1\times\cdots\times R_N \in \RR$, with $|R_1|=2^{-k}$, we obtain
\begin{eqnarray*}
\|P_\Omega \widetilde {T_{Q_k\phi}^3b}\|_{L^2(\T^N)}^2 &=&  \sum_{R\in \mathcal {M}_k(\Omega)}\sum_{R'\subseteq R, |R'_1|=2^{-k}}|\left(\widetilde {T_{Q_k\phi}^3b}\right)_{R'}|^2\\ &\le& \sum_{R\in \mathcal {M}_k(\Omega)}\|P_R \widetilde {T_{Q_k\phi}^3b}\|_{L^2(\T^N)}^2\\ &\lesssim& \| \phi\|^2_{\BMO_m^d(\T^N)} \|b\|^2_{\bmo^d(\T^N)}\sum_{R\in \mathcal {M}_k(\Omega)}|R|\\ &\lesssim&  |\Omega|\| \phi\|^2_{\BMO_m^d(\T^N)} \|b\|^2_{\bmo^d(\T^N)}.
\end{eqnarray*}
\vskip .2cm
Let us now consider term I.
For any open set $\Omega \subseteq \T^N$, we write for $T\in \D^{N_2}$ given ,
 $$\Omega_T=\cup_{S\in \D^{N_1}, S\times T\subseteq \Omega}S.$$ We obtain using properties of $\bmo^d(\T^N)$ and the estimate in Lemma \ref{lem:estim},
\begin{eqnarray*}
    \frac{1}{|\Omega|} \|P_{\Omega} \widetilde {T_{E_k\phi}^3b}\|_{L^2(\T^N)}^2
&\le& \frac{1}{|\Omega|} \sum_{R =S\times T\in \D^{N_1}\times \D^{N_2}, |T_1|>2^{-k},
   R \subset \Omega}  |m_{S}\phi_T|^2 |m_T b_S|^2 \\
& \lesssim & \frac{1}{|\Omega|} \sum_{S\in \D^{N_1}}  \|P_{\Omega_S}\Pi_{E_km_S \phi}\chi_{\Omega_S}b_S\|_{L^2(\T^{N_2})}^2 \\
&\lesssim &  \frac{1}{|\Omega|} \sum_{S\in \D^{N_1}}  \|\Pi_{P_{\Omega_S}E_km_S \phi}\left(\sum_{T\in \D^{N_2}, |T_1|=2^{-k}, S\times T\subseteq \Omega}\chi_T\chi_{\Omega_S}b_S\right)\|_{L^2(\T^{N_2})}^2\\ &\lesssim&  \frac{1}{|\Omega|} \sum_{S\in \D^{N_1}}\| P_{\Omega_S}E_km_S\phi \|^2_{\BMO^d(\T^{N_2})}\|\sum_{T\in \D^{N_2}, |T_1|=2^{-k}, S\times T\subseteq \Omega}\chi_Tb_S\|_{L^2(\T^{N_2})}^2\\ &\lesssim&  \| \phi \|^2_{\BMO_m^d(\T^N)}\frac{1}{|\Omega|} \sum_{S\in \D^{N_1}}\|\sum_{T\in \D^{N_2}, |T_1|=2^{-k}, S\times T\subseteq \Omega}\chi_Tb_S\|_{L^2(\T^{N_2})}^2\\ &\lesssim&  \| \phi \|^2_{\BMO_m^d(\T^N)}\frac{1}{|\Omega|} \|\sum_{T\in \D^{N_2}, |T_1|=2^{-k}}\chi_T\sum_{S\in \D^{N_1},S\times T\subseteq \Omega}b_Sh_S\|_{L^2(\T^{N})}^2\\ &\lesssim&  \| \phi \|^2_{\BMO_m^d(\T^N)}\frac{1}{|\Omega|} \sum_{T\in \D^{N_2}, |T_1|=2^{-k}}\|\chi_TP_{\Omega_T}b\|_{L^2(\T^{N})}\\ &\lesssim&  \| \phi \|^2_{\BMO_m^d(\T^N)}\|b\|^2_{\bmo^d(\T^N)}\frac{1}{|\Omega|}\sum_{T\in \D^{N_2}, |T_1|=2^{-k}}|T||\Omega_T|\\ &\lesssim& \| \phi \|^2_{\BMO^d(\T^N)}  \|b\|^2_{\bmo^d(\T^N)}.
\end{eqnarray*}
We conclude that for any $\phi\in \BMO_m^d(\T^N)$ and any $b\in \bmo^d(\T^N)$, and for any $k\in \mathbb N_0$,
$$\|\Pi_{T_\phi^3b}E_k^{(1)}\|_{L^2(\T^N)\rightarrow L^2(\T^N)}\le C\|\phi\|_{\BMO_m^d(\T^{N})}\|b\|_{\bmo^d(\T^{N})}$$
and the constant $C>0$ does not depend on $k$. Thus
$$\|\Pi_{T_\phi^3b}\|_{L^2(\T^N)\rightarrow L^2(\T^N)}\le C\|\phi\|_{\BMO_m^d(\T^{N})}\|b\|_{\bmo^d(\T^{N})}$$
which is equivalent to saying that $T_\phi^3b\in \BMO^d(\T^N)$.
The proof is complete.
\end{proof}

Let us prove that the last type of operators, $T_\phi^4$ is also bounded from $\bmo^d(\T^N)$ to $\BMO^d(\T^N)$. We recall that
$$T_{\phi}^4f(t):=\sum_{R \in \D^{N_1}, S \in \D^{N_2}, T \in \D^{N_3}}
        h_R(t_{N_1}) \frac{\chi_S(t_{N_2})}{|S|} h_T(t_{N_3}) m_T \phi_{S \times R} m_R b_{S \times T},$$
        $t=(t_{N_1},t_{N_2},t_{N_3})\in \T^{N_1}\times \T^{N_2}\times \T^{N_3}.$
\begin{proposition}\label{prop:notbad4}
Let $\phi\in L^2(\T^N)$. Then the operator $T_\phi^4$ is a bounded operator from $\bmo^d(\T^N)$ to $\BMO^d(\T^N)$ if $\phi\in \BMO_m^d(\T^N)$. Moreover,
$$\|T_\phi^4\|_{\bmo^d(\T^N)\rightarrow \BMO^d(\T^N)}\lesssim \|\phi\|_{\BMO_m^d(\T^N)}.$$
\end{proposition}
\begin{proof}
Before proving the proposition, let us observe the following boundedness of the paraproducts $\Pi^{\vec {\beta}}$ on $L^2(\T^N)$.
\begin{lemma}\label{lem:othersparaL2}
If $\phi\in \BMO^d(\T^N)$, then for any $\vec {\beta}\in \{0,1\}^{N}$, $\Pi_\phi^{\vec {\beta}}$ is bounded on $L^2(\T^N)$. Moreover,
$$\|\Pi_\phi^{\vec {\beta}}\|_{L^2(\T^N)\rightarrow L^2(\T^N)}\lesssim \|\phi\|_{\BMO^d(\T^N)}\backsimeq \|\Pi_\phi\|_{L^2(\T^N)\rightarrow L^2(\T^N)}.$$
\end{lemma}
\begin{proof}
The proof is known for $\Pi=\Pi^{\vec 0}$ and $\Pi^{\vec 1}$ (see \cite{bp}). Let us prove the lemma for $\vec {\beta}\neq \vec {0}, \vec {1}$. We will follow the two-parameter proof of \cite{pottsad}. Recall that $\Pi^{\vec \beta}$ is of the following general form
$$\Pi_\phi^{\vba}f:=\sum_{S\in \D^{N_1}, T\in \D^{N_2}}\phi_{S\times T}m_Sf_Th_S\frac{\chi_T}{|T|},\,\,\, 0<N_1,N_2<N,\,\,\,N_1+N_2=N.$$
It is clear given $\phi\in \BMO^d(\T^N)$, the boundedness of $\Pi_\phi^{\vba}$ on $L^2(\T^N)$ is equivalent to saying that for any $f,g\in L^2(\T^N)$, the quantity $K(f,g):=\sum_{S\in \D^{N_1}, T\in \D^{N_2}}m_Sf_Tm_Tg_Sh_Sh_T$ is in $H^{1,d}(\T^N)$.

Let us prove that $K:L^2(\T^N)\times L^2(\T^N)\rightarrow H^{1,d}(\T^N)$ boundedly. We write $$\tilde {f}_T(s)=\sup_{S\in \D^{N_1}}\frac{\chi_S(s)}{|S|}\int_S |f_T(t)|dt.$$ It comes that
\Beas
\|K(f,g)\|_{H^{1,d}(\T^N)} &:=& \int_{\T^{N_1}}\int_{\T^{N_2}}\left(\sum_{S\in \D^{N_1}, T\in \D^{N_2}}\frac{\chi_{S\times T}(s,t)}{|S||T|}|m_Sf_T|^2|m_Tg_S|^2\right)^{1/2}dsdt\\ &\le& \int_{\T^{N_1}}\int_{\T^{N_2}}\left(\sum_{S\in \D^{N_1}, T\in \D^{N_2}}\frac{\chi_{S\times T}(s,t)}{|S||T|}\left(\tilde {f}_T(s)\right)^2\left(\tilde {g}_S(t)\right)^2\right)^{1/2}dsdt\\ &\le& \int_{\T^{N_1}}\int_{\T^{N_2}}\left(\sum_{ T\in \D^{N_2}}\frac{\chi_{T}(t)}{|T|}|\tilde {f}_T(s)|^2\right)^{1/2}\left(\sum_{S\in \D^{N_1}}\frac{\chi_{S}(s)}{|S|}|\tilde {g}_S(t)|^2\right)^{1/2}dsdt\\ &\le& \left(\sum_{ T\in \D^{N_2}}\|\tilde {f}_T\|_{L^2(\T^{N_1})}^2\right)^{1/2}\left(\sum_{ S\in \D^{N_1}}\|\tilde {g}_S\|_{L^2(\T^{N_2})}^2\right)^{1/2}\\ &\lesssim& \left(\sum_{ T\in \D^{N_2}}\|f_T\|_{L^2(\T^{N_1})}^2\right)^{1/2}\left(\sum_{ S\in \D^{N_1}}\|g_S\|_{L^2(\T^{N_2})}^2\right)^{1/2}\\ &\lesssim& \|f\|_{L^2(\T^N)} \|g\|_{L^2(\T^N)}.
\Eeas
It follows easily that $$\|\Pi_\phi^{\vec {\beta}}\|_{L^2(\T^N)\rightarrow L^2(\T^N)}\lesssim \|\phi\|_{\BMO^d(\T^N)}\backsimeq \|\Pi_\phi\|_{L^2(\T^N)\rightarrow L^2(\T^N)}.$$
\end{proof}
Coming back to the proof of the proposition, we proceed as in the previous proposition. We estimate the $BMO$-norm of $\widetilde {T_\phi^4b}$. As before, we write $E_k$ and $Q_k$ for $E_k^{(1)}$ and $Q_k^{(1)}$ respectively, and
\begin{equation*}
  \widetilde {T_\phi^4b}  =  \widetilde {T_{E_k\phi}^4b} + \widetilde {T_{Q_k\phi}^4b}=
     I + II.
\end{equation*}

Let us start again with the second term $II$. We observe that $\widetilde {T_{Q_k\phi}^4b}$ has only nontrivial Haar
coefficients for $K=K_1\times\cdots\times K_N \in \RR$, with $|K_1|=2^{-k}$. We first compute $\|P_K \widetilde {T_{Q_k\phi}^4b}\|_{L^2(\T^N)}^2$
for $K\in \RR$ a rectangle of this type. We write $K=R\times S\times T$, $R\in \D^{N_1}$, $S\in \D^{N_2}$ and $T\in \D^{N_3}$. It follows using among others Lemma \ref{lem:othersparaL2} and Lemma \ref{lem:estim} that
\begin{eqnarray*}
\|P_K\widetilde {T_{Q_k\phi}^4b}\|_{L^2(\T^N)}^2 &\le& \|\sum_{R' \subseteq R\in \D^{N_1}}\sum_{S'\subseteq S\in \D^{N_2}}\sum_{T'\subseteq T\in\D^{N_3}}
        h_{R'} \frac{\chi_{S'}}{|S'|} h_{T'} m_{T'} \phi_{S' \times R'} m_{R'} b_{S' \times T'}\|_{L^2(\T^N)}^2\\ 
        &\le& \sum_{T'\subseteq T\in\D^{N_3}}\|\sum_{R' \subseteq R\in \D^{N_1}}\sum_{S'\subseteq S\in \D^{N_2}}h_{R'} \frac{\chi_{S'}}{|S'|} m_{T'} \phi_{S' \times R'} m_{R'} b_{S' \times T'}\|_{L^2(\T^{N-N_3})}^2\\ 
        &=& \sum_{T'\subseteq T\in\D^{N_3}}\|\Pi_{P_{S\times R}m_{T'}\phi}^{\vec {\beta}}\chi_RP_Sb_{T'}\|_{L^2(\T^{N-N_3})}^2\\ 
        &\lesssim& \sum_{T'\subseteq T\in\D^{N_3}}\|P_{S\times R}m_{T'}\phi\|_{\BMO^d(\T^{N-N_3})}^2\|\chi_R P_Sb_{T'}\|_{L^2(\T^{N-N_3})}^2\\
        &\lesssim& \|\phi\|_{\BMO_m^d(\T^N)}^2 \sum_{T'\subseteq T\in\D^{N_3}} \|\chi_RP_Sb_{T'}\|_{L^2(\T^{N-N_3})}^2\\  
        &=& \|\phi\|_{\BMO_m^d(\T^N)}^2\|\chi_RP_{S\times T}b\|_{L^2(\T^{N})}^2\\ 
        &\lesssim& |R||S||T|\|\phi\|_{\BMO_m^d(\T^N)}^2\|b\|_{\bmo^d(\T^N)}^2.
\end{eqnarray*}

The case of general open subset of $\T^N$ is then handled as in the proof of the previous proposition.

Next we consider the first term. Given an open set $\Omega\subset \T^N$, for $T\in \D^{N_3},\,\,\,0<N_3<N$, we define by $\Omega_T$ as follows

\Bes
\Omega_T:=\cup_{S\times R\in \D^{N_2}\times\D^{N_1},R\times S\times T\subset \Omega} S\times R,
\,\,\,N_1+N_2+N_3=N.
\Ees
Firstly, using Lemma \ref{lem:othersparaL2} and the definition of the operators $\Pi^{\vec{\beta}}$, we obtain for any $b\in \bmo^d(\T^N)$,
\Beas
\|P_\Omega\left(\widetilde {T_{E_k\phi}^4}b\right)\|_{L^2(\T^N)}^2 &=& \|P_\Omega\left(\sum_{R \in \D^{N_1}, S \in \D^{N_2}, T \in \D^{N_3}, |R_1|>2^{-k}}
        h_R \frac{\chi_S}{|S|} h_T m_T \phi_{S \times R} m_R b_{S \times T}\right)\|_{L^2(\T^N)}^2\\ &\le& \sum_{T \in \D^{N_3}}\|P_{\Omega_T}\sum_{ S \in \D^{N_2}, R \in \D^{N_1}, |R_1|>2^{-k}}\frac{\chi_S}{|S|} h_R m_T \phi_{S \times R} m_R b_{S \times T}\|_{L^2(\T^{N-N_3})}^2\\ &=& \sum_{T \in \D^{N_3}}\|\sum_{ S \in \D^{N_2}, R \in \D^{N_1}, |R_1|>2^{-k}, R\times S\times T\subseteq \Omega}\frac{\chi_S}{|S|} h_R m_T \phi_{S \times R} m_R b_{S \times T}\|_{L^2(\T^{N-N_3})}^2\\ &=& \sum_{T \in \D^{N_3}}\|\Pi_{P_{\Omega_T}E_km_T\phi}^{\vec {\beta}}\left(b_T\right)\|_{L^2(\T^{N-N_3})}^2.
\Eeas 
Secondly, slicing and using Lemma \ref{lem:othersparaL2}, and the properties of $\BMO_m^d(\T^N)$, we obtain for any $b\in \bmo^d(\T^N)$,

\Beas       
  \|P_\Omega\left(\widetilde {T_{E_k\phi}^4}b\right)\|_{L^2(\T^N)}^2 &\leq& \sum_{T \in \D^{N_3}}\|\Pi_{P_{\Omega_T}E_km_T\phi}^{\vec {\beta}}\left(\sum_{ R \in \D^{N_1}, |R_1|=2^{-k}}\chi_R\sum_{ S \in \D^{N_2},  R\times S\times T\subseteq \Omega}h_Sb_{S\times T}\right)\|_{L^2(\T^{N-N_3})}^2\\  &\lesssim& \sum_{T \in \D^{N_3}}\|P_{\Omega_T}E_km_T\phi\|_{\BMO^d(\T^{N-N_3})}^2\|\sum_{ R \in \D^{N_1}, |R_1|=2^{-k}}\chi_R\\ & & \sum_{ S \in \D^{N_2},  R\times S\times T\subseteq \Omega}h_Sb_{S\times T}\|_{L^2(\T^{N-N_3})}^2.
\Eeas
Finally, using Lemma \ref{lem:estim}, we obtain
\Beas
\|P_\Omega\left(\widetilde {T_{E_k\phi}^4}b\right)\|_{L^2(\T^N)}^2   &\lesssim& \|\phi\|_{\BMO_m^d(\T^N)}^2\sum_{T \in \D^{N_3}}\|\sum_{ R \in \D^{N_1}, |R_1|=2^{-k}}\chi_R\sum_{ S \in \D^{N_2},  R\times S\times T\subseteq \Omega}h_Sb_{S\times T}\|_{L^2(\T^{N-N_3})}^2\\ &=& \|\phi\|_{\BMO_m^d(\T^N)}^2\|\sum_{ R \in \D^{N_1}, |R_1|=2^{-k}}\chi_R\sum_{ S \in \D^{N_2}, T\in \D^{N_3}, R\times S\times T\subseteq \Omega}h_Th_Sb_{S\times T}\|_{L^2(\T^N)}^2\\ &\lesssim& \|\phi\|_{\BMO_m^d(\T^N)}^2\sum_{R\in \D^{N_1} , |R_1|=2^{-k}}\|\chi_{R}P_{\Omega_{R}}b\|_{L^2(\T^N)}^2\\ &\lesssim& \|\phi\|_{\BMO_m^d(\T^N)}^2\|b\|_{\bmo^d(\T^N)}^2\sum_{R\times S\in \D^{N-N_3} , |R_1|=2^{-k}}|R||\Omega_{R}|\\
        &\le& |\Omega|\|\phi\|_{\BMO_m^d(\T^N)}^2\|b\|_{\bmo^d(\T^N)}^2.
\Eeas
The proof of this proposition is complete.
\end{proof}
Using the decomposition (\ref{eq:expanmuloper}), the paraproduct result in Theorem \ref{thm:multothers}, Proposition \ref{prop:multiothers}, Lemma \ref{lemm:notbad1} and Lemma \ref{lemm:notbad2}, Proposition \ref{prop:notbad3} and Proposition \ref{prop:notbad4}, we obtain that
\begin{equation}   \label{eq:dyest}
   \| M_\phi f \|_{\BMO^d(\T^N)} \lesssim ( \|\phi\|_\infty + \|\phi \|_{\LMO^d(\T^N)}) \|f\|_{\bmo^d(\T^N)}.
\end{equation}

This finishes the proof of Theorem \ref{thm:maindy}.
\end{proof}

We can now prove Theorem \ref{main}.
\begin{proof}[Proof of Theorem \ref{main}] We recall that $M_\phi$ is the multiplication operator by $\phi$. If $M_\phi$ defines a bounded operator from $\bmo(\T^N)$ to $\BMO(\T^N)$, then because $\BMO(\T^N)=\cap_{\alpha\in \T^N}\BMO^{d,\alpha}(\T^N)$, $\phi\in \mathcal {M}(\bmo(\T^N), \BMO^{d,\alpha}(\T^N))$ for all $\alpha\in \T^N$. It follows from Theorem \ref{thm:forw} that $\phi\in L^\infty(\T^N)\cap \LMO^{d,\alpha}(\T^N)$ for all $\alpha\in \T^N$. Hence by the definition of $\LMO(\T^N)$, we conclude that $\phi\in L^\infty(\T^N)\cap \LMO(\T^N)$.

For the reverse direction, let $\phi \in \LMO(\T^N) \cap L^\infty(\T^N)$ and let $f \in \bmo(\T^N)$. By Definition
\ref{def:LMO} and observations made on $\bmo(\T^N)$ at the introduction, $\phi \in  \LMO^{d,\alpha}(\T^N) \cap L^\infty(\T^N) $ and
$f \in \bmo^{d,\alpha}(\T^N)$ for all $\alpha \in \T^N$, with uniformly bounded respective norms. Thus by Theorem \ref{thm:maindy}, $(M_\phi f) \in \BMO^{d,\alpha}(\T^N)$ for all $\alpha \in \T^N$, with
$$
    \| M_\phi f \|_{\BMO^{d,\alpha}(\T^N)} \lesssim  (\|\phi \|_\infty + \| \phi\|_{\LMO^{d.\alpha}}) \|f\|_{\bmo}.
$$
Consequently,
$$
\|M_\phi f \|_{\BMO} \lesssim  (\|\phi \|_\infty + \| \phi\|_{\LMO} ) \|f\|_{\BMO}.
$$
This finishes the proof of the theorem.
\end{proof}

\bibliographystyle{plain}

\end{document}